\newcommand{\eqdef}{\!\overset{\text{\tiny def}}{=}\!}
\newcommand{\R}{\mathbb{R}}
\newcommand{\llangle}{\langle\!\langle}
\newcommand{\rrangle}{\rangle\!\rangle}
\newcommand{\E}{\mathbf{E}}
\newcommand{\rate}{\Lambda}
\newcommand{\Hfull}{H_0}
\newcommand{\Lfull}{\mathcal{L}_0}
\newcommand{\Hharm}{\bar{H}}
\newcommand{\hf}{\frac1{2}}
\newcommand{\betaI}{\tfrac{1}{\beta}}
\newcommand{\alphaI}{\tfrac{1}{\alpha}}
\newcommand{\alphaII}{\tfrac{2}{\alpha}}
\renewcommand{\H}{\mathbb{H}}
\newcommand{\Qcurve}{\varrho}
\newtheorem{theorem}{Theorem}[section]
\newtheorem{corollary}[theorem]{Corollary}
\newtheorem{lemma}[theorem]{Lemma}
\newtheorem{remark}[theorem]{Remark}
\newtheorem{definition}[theorem]{Definition}
\renewcommand{\H}{\mathbb{H}}
\title[Ergodicity of systems with
  a Lennard--Jones--like Potential]{Geometric  Ergodicity of Two--dimensional Hamiltonian  systems with
  a Lennard--Jones--like Repulsive Potential}
\author[B. Cooke]{Ben Cooke}
\address{ Department of Mathematics, Duke University, Durham NC}
\author[D.P. Herzog]{David P. Herzog}
\address{Department of Mathematics, Iowa State University, Ames IA}
\author[J. C. Mattingly]{Jonathan C. Mattingly}
\address{Department of Mathematics, Center for Theoretical
    and Mathematical Sciences,
Center for Nonlinear and Complex Systems, and Department of Statistical
    Sciences\\ Duke
    University, Durham, NC, 27708-0251}
\author[S. A. McKinley]{Scott A. McKinley}
\address{ Department of Mathematics, Tulane University, New Orleans, LA}
\author[S. C. Schmidler]{Scott C. Schmidler}
\address{Department of Statistical
    Science, Department of Computer
    Science, Program in Computational Biology and Bioinformatics,
    Program in Structural Biology and Biophysics, Duke University,
    Durham NC 27708}
\date{\today}
\begin{document}
\maketitle

\newcommand{\p}{\mathbf{p}}
\newcommand{\q}{\mathbf{q}}
\newcommand{\abs}[1]{\left|#1\right|}
\newcommand{\me}{\mathrm{e}}

\section{Introduction}

Molecular dynamics simulation is among the most important and widely
used tools in the study of molecular systems, providing fundamental
insights into molecular mechanisms at a level of detail unattainable
by experimental methods
\cite{Allen:1987,Leach:1996,Frenkel:1996,Schlick:2002,Tuckerman_2010}.  Usage of
molecular dynamics spans a diverse array of fields, from physics and
chemistry, to molecular and cellular biology, to engineering and
materials science. Due to their size and complexity, simulations of
large systems such as biological macromolecules (DNA, RNA, proteins,
carbohydrates, and lipids) are typically performed under a classical
mechanics representation. A critical requirement of such simulations
is ergodicity, or convergence in the limit to the equilibrium
(typically canonical) Boltzmann measure $\mu(d\q,d\p) =
Z(\beta)^{-1}\me^{-\beta H(\q,\p)}d\q d\p$.  Although ergodicity is commonly
assumed,
recently \cite{Cooke:2008} showed that many commonly used
deterministic dynamics methods for simulating the canonical
(constant-temperature) ensemble fail to be ergodic.  They also showed
that introduction of a stochastic hybrid Monte Carlo (HMC) corrector
guarantees ergodicity; however, HMC scales poorly with system
dimension and is rarely used for macromolecules.  \cite{Cooke:2008}
also show empirically that more commonly used stochastic Langevin
dynamics \cite{Pastor:1994} appear to exhibit ergodic behavior, but
were unable to provide rigorous proof.

The key difficulty in applying existing arguments
\cite{MattinglyStuartHigham02} is the appearance of singularities in the
potential $U(\q)$. Most modern molecular mechanics force fields
\cite{Pearlman:1995,Brooks:1983,Jorgensen:1988} take the form
\begin{align*}
U(\q) & = \sum_{\rm bonds} K_1 (r - r^*)^2
+ \sum_{\rm angles} K_2 (\theta - \theta^*)^2 
+ \sum_{\rm dihedrals} \frac{V_n}{2} 
[1 + {\rm cos}(n\phi - \gamma)] \\
& \qquad + \;\; \sum_{i<j} \left [ \frac{A_{ij}}{r_{ij}^{12}} - 
  \frac{B_{ij}}{r_{ij}^6} + 
  \frac{q_iq_j}{\epsilon \, r_{ij}} 
  \right ]
\end{align*}
Here the first three terms involve bond length, angle, and torsional
energies; being bounded, these are easily handled. The difficulty
arises from the non-covalent electrostatic and Van der Waals forces,
the latter modeled by a Lennard-Jones potential, which give rise to
singularities as two atoms in the system approach each other at close
range.

In this paper we establish ergodicity of Langevin dynamics for
a simple two-particle system involving a Lennard-Jones type potential.
Moreover, we show that
the dynamics is {\it geometrically} ergodic (i.e. has a spectral gap) and
converges at a geometric rate.  Geometric ergodicity is sufficient to
imply existence of a central limit theorem for ergodic averages of
functions $f$ with $\E_\mu(\abs{f}^{2+\delta})< \infty$ for some $\delta
>0$ \cite{Ibragimov:1971}, and also implies the existence of an exact
sampling scheme \cite{Kendall:2004}, although the latter need not be
practical. Loosely, proving an ergodic result has two central
ingredients. One provides continuity of the transition densities in
total variation norm which ensures that transitions from nearby points
behave similarly enough probabilistically, providing the basic mechanism of the
probabilistic mixing/coupling. This is often expressed in a
minorization condition (see Lemma~\ref{smallset}).  The other
ingredient gives control of
excursions towards infinity which ensures the existence of a
stationary measure and guarantees that sufficient probabilistic
mixing for an exponential convergence rate. The difficulty
in a problem is typically one or the other. 

As this paper was being accepted for publication,
we became aware of two papers which prove results related to this
paper; namely, \cite{MR2674062,MR3413926}.   The results are different
in the cases where both apply. Here we prove
exponential convergence to equilibrium from arbitrary initial data in
variants of the total variation distance by building an optimal Lyapunov
function. Consequently, our methods can handle weighted norms whose weight functions grow faster at
infinity.  In \cite{MR2674062,MR3413926}, the convergence of time
averages is proven in $L^2$ when the system is started from
equilibrium.  In this sense, these results are together best characterized as mixing and make use in a critical way that the invariant measure is known as they build on the idea of Hypercoercivity.  
However, the scope of these two impressive papers,
\cite{MR2674062,MR3413926}, is much larger. For example, they are able to handle the chain of
interacting diffusions while we handle only two particles interacting
currently with our methods.

In Section~\ref{ModelProblem}, we will see that in the current
setting, basic existence of a stationary measure is trivial since the
standard Gibbs measure built from the energy is invariant. Uniqueness
of the stationary distribution follows from now standard results on
hypoelliptic diffusions. However the control necessary to give a
convergence rate or even convergence  has previously been elusive.  Our approach follows
the established method of demonstrating the existence of a Lyapunov
function and associated small set; however, construction of the
Lyapunov function in the presence of a singular potential is
non-trivial and our approach constitutes one of the major innovations
of this paper. In many ways it builds on ideas in 
\cite{HairerMattingly:2009} and more obliquely is related to the ideas
in \cite{Rey-BelletThomas:2002}. In both cases, time averaging of the
instantaneous energy dissipation rate is used to build a Lyapunov
function. We use similar ideas here. In a nutshell, as in
\cite{HairerMattingly:2009} the technique consists of casting the
behavior of the system as the energy heads to infinity as a problem
with order one energy containing a small parameter equal to one over
the original system's energy. Then, classical stochastic averaging
techniques are used to build a Lyapunov function. Though the solution
is related to \cite{HairerMattingly:2009}, the presentation of
difficulties is quite different. In particular, we will see that
extracting the asymptotic behavior is more difficult than
\cite{HairerMattingly:2009} as our potentials do not strictly scale
homogeneously. To overcome this we will use the idea of approximating
the dynamics near the point at infinity from \cite{AKM12, HerMat15i, HerMat15ii} as well as techniques for
joining together peicewise-defined Lyapunov functions in an
analytically simple way from  \cite{HerMat15i, HerMat15ii}.

In Section~\ref{sec:reduced}, we state the main results of the paper
which are derived from the existence of an appropriate Lyapunov
function. Section~\ref{LyapOverview} gives an overview of the
construction of the Lyapunov function as well as some heuristic
descriptions of its origin. More specifically in Section~\ref{sec:Numericalexplorations},
we present some numerical experiments which show that our  Lyapunov
function is in some sense correct.  In Section~\ref{secBasicIdea}, we give a digestible overview of the
basic ideas used in the construction while in Section~\ref{sec:Hypocoercivity}, we give some indications of the
relation between the ideas discussed in Section~\ref{secBasicIdea} and the ideas of
hypocoercivity.  In Section~\ref{ApproxDyn}, we introduce the approximate dynamics which makes the analysis outlined in
Section~\ref{secBasicIdea} feasible. The actual Lyapunov function is defined in Section~\ref{sec:PoisonEquations} in terms of solutions of Poisson equations associated to the approximate dynamics introduced in Section~\ref{ApproxDyn}.  In Section~\ref{sec:consequences}, we give some consequences of the Lyapunov structure we have proven. In Section~\ref{sec:lyap_proof} and
the Appendix, we give the missing details from the proof that the
candidate function constructed is in fact a proper Lyapunov function.  We conclude in Section~\ref{sec:conclusion} by briefly discussing the
challenges of extending our results to larger systems and the case of a
harmonically growing potential which is not covered by our results.

\section{A Model Problem}
\label{ModelProblem}


Consider the two-particle Hamiltonian system
$(\mathbf{Q},\mathbf{P})= \left((Q_1,Q_2),(P_1,P_2)\right)$ with
Hamiltonian 
\begin{align*}
  \Hfull(\mathbf{Q},\mathbf{P}) = \frac{P_1^2}{2} +  \frac{P_2^2}{2} +  U(Q_1-Q_2)\end{align*}
and interaction potential
\begin{align}\label{Udef1}
  U(Q) = \sum_{j=1}^l a_j |Q|^{\alpha_j} >0, 
\end{align}
where $a_j \in \R$ with $a_1,a_{l}>0$, and $\alpha_1 > \cdots >
\alpha_l$. We assume that $\alpha_1 > 2$ and $\alpha_l <0$ (otherwise
no singularity exists).  The dynamics of this system is given by 
\begin{equation*}
  \begin{aligned}
       \dot Q_i=\frac{\partial \Hfull}{\partial P_i} \qquad
       \dot P_i &=-   \frac{\partial \Hfull}{\partial Q_i} \qquad
       \text{for $i=1,2$.}
  \end{aligned}
\end{equation*}

If we force the system with a noise whose magnitude is scaled to balance 
dissipation so as to place the system at temperature $T$, then we arrive at the system of coupled SDEs
\begin{equation} \label{SDE4D}
  \begin{aligned}
  dq_i&=p_i\, dt\qquad \text{for $i=1,2$} \\
  dp_1&= - U'(q_1-q_2)\,dt - \gamma p_1\,dt+\sigma dW_1(t)\\
  dp_2&= \;U'(q_1-q_2)\,dt - \gamma p_2\,dt +\sigma dW_2(t)
\end{aligned}
\end{equation}
where the friction $\gamma>0$ and $\sigma^2= 2 \gamma T$. 
Define 
\begin{align*}
 \mathbb{S}\eqdef\Big\{  (p_1,q_1,p_2,q_2) : q_1 \neq q_2 \Big\}.
\end{align*}
We will prove in Corollary~\ref{S-existance} that, if the initial
conditions are in $\mathbb{S}$, then with probability one there exists
a unique strong solution to \eqref{SDE4D} which is global in time and
stays in $\mathbb{S}$.

We define the Markov semigroup by
$(\mathcal{P}_t\phi)(\mathbf{p},\mathbf{q}) \eqdef
\E_{(\mathbf{p},\mathbf{q}) } \phi(\mathbf{p}_t,\mathbf{q}_t)$ where
$\E_{(\mathbf{p},\mathbf{q}) }$ is the expected value starting from
$(\mathbf{p},\mathbf{q})$ . This semigroup has a generator $\Lfull$ given
by
\begin{equation*}
  \Lfull \; \eqdef \sum_{i=1,2} \frac{\partial \Hfull}{\partial p_i}
    \frac{\partial}{\partial q_i} - \frac{\partial \Hfull}{\partial
      q_i} \frac{\partial}{\partial p_i} -\gamma p_i
    \frac{\partial}{\partial p_i} +\gamma T
    \frac{\partial^2}{\partial p_i^2}\,.
\end{equation*}
Additionally $\mathcal{P}_t$ induces a dual action on $\sigma$-finite
measures $\mu$ by acting on the left: $\mu \mathcal{P}_t$.  A measure $\mu_0$ is a
stationary measure of $\mathcal{P}_t$ if $\mu_0\mathcal{P}_t =\mu_0$.
In our setting, this is equivalent to asking that $\Lfull^* \rho_0 =0$
where $\mu_0(d\mathbf{p},d\mathbf{q}) = \rho_0(\mathbf{p},\mathbf{q})d\mathbf{p}\,d\mathbf{q}$.

It is a simple calculation to  see that if
\begin{align*}
\rho_0(\mathbf{p},\mathbf{q})\, \eqdef \,Ce^{-H_0(\mathbf{p},\mathbf{q})/T } 
\end{align*} 
for any $C$,
then  $\Lfull^*
\rho_0(\mathbf{p},\mathbf{q})=0$. Hence with this choice of $\rho_0$,
$\mu_0$ as defined above is a stationary measure.  However this
measure is not normalizable to make a probability measure since it is
only $\sigma$-finite. This stems from the fact that the Hamiltonian
is translationally invariant in $\mathbf{q}$. To rectify his  problem we will move to ``center of mass''
coordinates. 

\subsection{Reduction to Center of Mass Coordinates}
Let $\tilde q=\frac12(q_1-q_2)$, $\tilde
p=\frac12(p_1-p_2)$, 
$\bar{q}=\frac12(q_1+q_2)$, $\bar{p}=\frac12(p_1+p_2)$, $W=\frac12
(W_1 - W_2)$ and $B=\frac12 (W_1 + W_2)$. Then
\begin{equation}\label{eqSystem}
\begin{aligned}
  d\bar{q}_t&=\bar{p}_t \, dt\\
  d\bar{p}_t&= -\gamma  \bar{p}_t \, dt+\sigma dB_t\\
  d\tilde q_t&= \tilde p_t\, dt\\
  d\tilde p_t&= - U'(2 \tilde q_t)\, dt- \gamma \tilde p_t\, dt +\sigma dW_t   
\end{aligned}
\end{equation}
In these new coordinates, the system is described by variables
$(\bar{q},\bar{p})$ tracking the position and momentum of the center of mass,
and variables $(\tilde q,\tilde q)$ tracking the relative position and momentum
of the particles within the center of mass frame. This change of
coordinates simplifies our problem to two uncoupled Hamiltonian
sub-problems. The center of mass $(\bar{q},\bar{p})$, has Hamiltonian
\begin{align*}
 \Hharm(\bar{q},\bar{p})\, \eqdef \, \frac{\bar{p}^2}{2}
\end{align*}
which is the Hamiltonian of a free 1D particle, with corresponding
invariant measure given by a Gaussian (for momentum $\bar{p}$) times
1D Lebesgue measure (for position $\bar{q}$). Note that $\bar{p}$ follows an
Ornstein-Uhlenbeck process and hence converges exponentially quickly
to its (Gaussian) stationary measure. The position $\bar{q}$ will
diffuse through space like 1D Brownian motion and hence converges to
Lebesgue measure.

The remaining two variables $(\tilde q,\tilde p)$ are also a
Hamiltonian system with Hamiltonian 
\begin{align}\label{Hreduced}
  H(\tilde q,\tilde p)\,\eqdef\,\frac{\tilde p^2}{2} + U(2 \tilde q)\,.
\end{align}
which is a single particle interacting with a potential $U$ that
is attractive towards the origin at large distances, and
  repulsive at short distance.  So $(\tilde q,\tilde p)$ will have an invariant
{\it probability} measure. However convergence of this system is more
subtle; it possesses two difficulties stemming from the structure of
the potential. First, since $U(Q)$ is singular at points, a strictly
positive density does not exist everywhere in space. Second, there is
no immediate candidate for a Lyapunov function. Overcoming this second
obstacle will prove more difficult and will occupy the bulk of this
paper.

\section{Reduced system: main results}\label{sec:reduced}
We now turn to the study of the two--dimensional Hamiltonian system
described by \eqref{Hreduced}. In this section, we also state the principal
results on this reduced system.

Consider the two-dimensional deterministic Hamiltonian system with
Hamiltonian
\begin{align*}
H(Q,P)\eqdef\frac{P^2}2  + U(Q)
\end{align*}
and hence dynamics 
\begin{equation*}
 \begin{aligned}
   \dot Q_t=\frac{\partial H}{\partial P}(Q_t,P_t) =P_t\qquad\text{and}\qquad\dot P_t
   &=- \frac{\partial H}{\partial Q}(Q_t,P_t)= - U'( Q_t)\,.
 \end{aligned}
\end{equation*}
This system has only closed orbits, which lie completely in the
  upper half plane
denoted by $\H=\{ (Q,P) \in \R^2 : Q > 0\}$
provided the initial points lie in $\H$. To see this observe that when
$|(Q,P)| \rightarrow \infty$, $H(Q,P)$ is well approximated by
$\frac{1}{2}P^2+a_1Q^{\alpha_1} + a_KQ^{\alpha_l}$ which clearly has
level sets that are closed, homotopically a circle, and lie
completely in the upper half plane. (See Figure~\ref{Fig:levelconverge}).
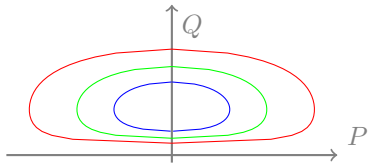
\begin{figure}
\centering
  \begin{tikzpicture}[scale=1,domain=-.1:2,rotate=90]
      \draw[->,thick,color=gray] (-.1,0) -- (2,0) node[below right] {$Q$};
      \draw[<-,thick,color=gray] (0,-2.2) node[above right] {$P$} -- (0,2.2);
     \draw[color=blue,thin]	plot[id=bottomBlue,domain=0.3179:0.97245]
     function{-sqrt(abs(1-x**4-.1/x**2))};
 \draw[color=blue,thin]	plot[id=topBlue,domain=0.3179:0.97245]
   function{sqrt(abs(1-x**4-.1/x**2))};  
   \draw[color=green,thin]	plot[id=topGreen,domain=0.2237470347:1.178353708]
  function{sqrt(abs(2-x**4-.1/x**2))};  
    \draw[color=green,thin]	plot[id=bottomGreen,domain=0.2237470347:1.178353708]
   function{-sqrt(abs(2-x**4-.1/x**2))};
   \draw[color=red,thin]	plot[id=topRed,domain=0.1581262410:1.409744948]
  function{sqrt(abs(4-x**4-.1/x**2))};  
    \draw[color=red,thin]	plot[id=bottomRed,domain=0.1581262410:1.409744948]
   function{-sqrt(abs(4-x**4-.1/x**2))};
   \end{tikzpicture}
\caption{ Level sets of $H(Q,P)=\eta$ for $\eta$ equals 1 (in blue), 2
  (in green), and 4 (in red)
where $H(Q,P)=\frac{1}{2} P^2+ Q^4 +\frac{1}{10} Q^{-2}$. }
\label{Fig:levelconverge}
\end{figure}

Addition of balanced noise and dissipation yields the associated
stochastic system of interest. Namely, for positive temperature $T$,
friction $\gamma$ and noise standard deviation
$\sigma=\sqrt{2\gamma T}$, we have
\begin{equation}
 \label{sde1}
 \begin{aligned}
   dq_t&=  p_t\,dt \\
   dp_t&= -U'(q_t) \,dt - \gamma p_t \,dt +
   \sigma \,dW_t \,.
 \end{aligned}
\end{equation}
This Markov process has generator
\begin{equation*}
  \mathcal{L} \; =   \frac{\partial H}{\partial p}  \frac{\partial\
       }{\partial q} -   \frac{\partial H}{\partial q}   \frac{\partial\
       }{\partial p}  -\gamma  p \frac{\partial}{\partial p} +  \gamma T  \frac{\partial^2}{\partial p^2}
\end{equation*}
and as in the previous section a straightforward calculation shows
that $\mu_*(dp\times dq)=\rho_*(q,p)dpdq$ is a
stationary measure with
\begin{align}
\label{eqn:gibbs}
  \rho_*(q,p) = Ce^{-H(q,p)/T},
\end{align}
since $\mathcal{L}^*  \rho_*=0$.  Unlike the stationary
measure of the unreduced system, this measure can be normalized and
made into a probability measure  for an appropriate choice of $C$ (since $H$ is no longer
translationally invariant).

In fact $\rho_*$ is the unique stationary measure of the system.
To see this first observe that \eqref{sde1} is
hypoelliptic and hence any weak solution to $\mathcal{L}^*\mu=0$
must locally have a smooth density with respect to Lebesgue measure.
Since $ \mu_*$ has an everywhere positive density with respect
to Lebesgue measure it must therefore be the only stationary measure,
since any stationary measure can be decomposed into its ergodic
components all of which must have disjoint support. Uniqueness of the
stationary measure is also a by-product of the exponential convergence
given in Theorem~\ref{MainThm} which is our main interest here.

To state this convergence result we need a distance between
probability measures appropriate for our setting. To this end, for any
$c \geq 0$ we define for $\phi:\H \rightarrow \R$ the weighted
supremum-norm
\begin{align*}
  \|\phi\|_{c} \eqdef \sup_{(q,p) \in \H}
  |\phi(q,p)|e^{-c H(q,p)}
\end{align*}
and the weighted total-variation norm on signed measures
$\nu$ with the property that $\nu(\H)=0$ by
\begin{align*}
  \|\nu\|_{c} \eqdef \sup_{\phi : \|\phi\|_{c} \leq 1} \int_{\H} \phi\, d\nu\,.
\end{align*}
When $c=0$  this is just the standard total-variation norm.
We define $\mathcal{M}_{c}(\H)$ to be the set of probability
measures $\mu$ on $\H$ with $\int_{\H} \exp(c H)
d\mu < \infty$. Then we have the following convergence result.
\begin{theorem}\label{MainThm}  For any $c\in (0, 1/T)$, there exist positive constants $C$ and
  $D$ such that for any two probability measures $\mu_1, \mu_2 \in
  \mathcal{M}_{c}(\H)$
  \begin{align*}
    \|\mu_1\mathcal{P}_t -\mu_2 \mathcal{P}_t \|_{c} \leq C e^{-D t} \|\mu_1-\mu_2\|_{c}
  \end{align*}
  for all $t\geq 0$.  In particular the system has a unique invariant measure, which
  necessarily coincides with $\mu_*$ defined above, and to which
    the distribution of ($q_t,p_t)$ converges exponentially fast.
\end{theorem}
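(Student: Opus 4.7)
The plan is to invoke the standard drift/minorization framework for geometric ergodicity in a weighted total-variation norm, in the spirit of \cite{HairerMattingly:2009} and \cite{MattinglyStuartHigham02}. Specifically, I would reduce Theorem~\ref{MainThm} to two ingredients: (i) a Foster--Lyapunov drift inequality $\mathcal{L}V \le -\delta V + b\,\mathbf{1}_{C_R}$ for a function $V$ comparable to $e^{\beta H}$ on a sublevel set $C_R = \{H \le R\} \subset \H$; and (ii) a minorization $\mathcal{P}_{t_*}(x,\cdot) \ge \alpha\,\nu(\cdot)$ for all $x \in C_R$, some fixed $t_*>0$, and some probability measure $\nu$. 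Together these imply the claimed exponential contraction in $\|\cdot\|_{\beta}$ via a weighted-Harris argument, and as a by-product yield uniqueness of the invariant measure (which must then coincide with $\mu_*$).

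The minorization is the easier of the two. The noise $\sigma\partial_p$ enters with constant coefficient, and $[\partial_p,\,p\partial_q - U'(q)\partial_p] = \partial_q$, so H\"ormander's bracket condition holds on all of $\H$ and $\mathcal{P}_t(x,\cdot)$ has a smooth, strictly positive density there. A short controllability argument, showing that any point of $C_R$ can be steered by an admissible control to a fixed reference point in $\H$ in time at most $t_*$ while staying uniformly away from $\{q=0\}$, combined with continuity and compactness of $C_R$, promotes pointwise positivity to the desired uniform lower bound.

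The heart of the proof is the construction of $V$. The naive choice $V_0 = e^{\beta H}$ fails: a direct computation gives
\begin{equation*}
\frac{\mathcal{L}V_0}{V_0} \;=\; \beta\gamma\bigl[T + (\beta T - 1)p^2\bigr],
\end{equation*}
which, even for small $\beta$, is strictly negative only when $p^2$ is large. High-energy orbits spend arbitrarily long near their turning points, where $p\approx 0$ while $H$ is large, so the instantaneous dissipation cannot be used. The remedy, following \cite{Rey-BelletThomas:2002} and \cite{HairerMattingly:2009}, is to replace $\gamma p^2$ by its orbit-average $\gamma\overline{p^2}(H)$ along the closed deterministic trajectory at energy $H$; the virial identity $\overline{p^2}(\eta) = \overline{qU'(q)}(\eta)$, combined with the leading asymptotics $U(q)\sim a_1 q^{\alpha_1}$ at infinity and $U(q)\sim a_K q^{\alpha_K}$ near zero, shows that $\overline{p^2}(\eta) \ge c_*\eta$ for some $c_*>0$ and all $\eta$ large. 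I would then construct $\Psi$ solving the cohomological equation
\begin{equation*}
\bigl(p\partial_q - U'(q)\partial_p\bigr)\Psi \;=\; \gamma\bigl(p^2 - \overline{p^2}(H)\bigr)
\end{equation*}
by explicit integration along the Hamiltonian flow; because the right-hand side has zero mean on each closed orbit, $\Psi$ is globally well-defined. Setting $V = \exp\bigl(\beta(H+\Psi)\bigr)$, the logarithmic drift becomes
\begin{equation*}
\frac{\mathcal{L}V}{V} \;=\; -\beta\gamma\,\overline{p^2}(H) + \beta\gamma T + \beta^2\gamma T\bigl(p+\partial_p\Psi\bigr)^2 + R(q,p),
\end{equation*}
where $R$ collects the dissipative and noise corrections to $\mathcal{L}\Psi$; the averaged term $-\beta\gamma c_* H$ dominates the remainder once $\beta$ and $H$ lie in the right range, yielding the drift inequality outside $C_R$.

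The main obstacle is making this averaging rigorous with uniform control in \emph{both} regimes: orbits of large $H$ extending to large $q$, and orbits whose inner turning point is near the singularity at $q=0$. One must establish $|\Psi| = o(H)$ (so that $V \asymp e^{\beta H}$) and sharp bounds on $\partial_p\Psi, \partial_q\Psi$ ensuring that the carr\'e-du-champ term $\beta^2\gamma T(p+\partial_p\Psi)^2$ does not overwhelm the averaged dissipation $\beta\gamma c_* H$. This is precisely where the rescaling approach of \cite{HairerMattingly:2009} --- viewing the high-energy regime as an $O(1)$ problem with a small parameter $\varepsilon \sim 1/\sqrt{H}$ and appealing to classical stochastic averaging --- becomes essential: it delivers uniform-in-energy estimates on $\Psi$ and its derivatives, identifies the admissible range of $\beta$, and the threshold $\beta_0 > 2/T$ emerges after optimizing the resulting inequality.
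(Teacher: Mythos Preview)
Your proposal is correct and follows essentially the same route as the paper: reduce to the Harris drift/minorization framework, obtain the minorization on sublevel sets of $H$ via hypoellipticity plus a control argument (the paper's Lemma~\ref{smallset}), and build the Lyapunov function by solving the Poisson equation $\mathcal{H}\Psi=\gamma(p^2-\mathcal{A}(p^2))$ along deterministic orbits, with the key estimate $\mathcal{A}(p^2)(\eta)\sim \Lambda_*\eta$ supplied by the scaling structure of $U$. The only cosmetic difference is that the paper first establishes the linear drift $dV_t\le -\gamma(\Lambda_*-\delta)V_t\,dt+C\,dt+dM_t$ for $V=H+\Psi$ together with a bound on $\langle M\rangle_t$, and then exponentiates (Corollary~\ref{genLyap}) to obtain the Lyapunov function $e^{\beta H}$ with the threshold $\beta_0=2\Lambda_*/T$; you instead apply $\mathcal{L}$ directly to $e^{\beta(H+\Psi)}$ and balance the carr\'e-du-champ term against the averaged dissipation, which is the same computation in a different order.
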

%
%
Our proof of Theorem~\ref{MainThm}
will follow the now standard approach of establishing the existence of
an appropriate ``small set'' and a Lyapunov function
\cite{MeynTweedie93}. Similar to \cite{MattinglyStuartHigham02}, we
will use a control argument coupled with hypoellipticity to establish
the existence of a small set.  While this is rather standard, the
technique used to prove the existence of a Lyapunov function is less
standard and one of the central contributions of this paper.


\section{The Lyapunov function: Overview}
\label{LyapOverview}
\subsection{Heuristics and motivating discussion}
\label{Heuristics}
We wish to control motion out to infinity ($|(q,p)| \rightarrow \infty$) 
as well as in the neighborhood of the singularity ($q \rightarrow
0^+$). A standard route to obtaining such control is to find a
Lyapunov function
$V:\H \rightarrow (0,\infty)$ so that 
\begin{align}\label{LyapGen}
  dV(q_t,p_t) \leq -c V(q_t,p_t) dt + Cdt +dM_t
\end{align}
for some martingale $M_t$ and positive constants $c,C$ and such that
$H \leq  C_0 V$ for some positive $C_0$.  In particular, the fact that
$V \rightarrow \infty$ as $q \rightarrow 0^+$ allows us to control the
time spent near $q=0$.
 
The first reasonable choice for a Lyapunov function might be to try the Hamiltonian
$H(q,p)$ itself. Using It\^o's formula, we see that 
\begin{align}\label{Heq}
 d H(q_t,p_t) = -\gamma p_t^2dt + \frac{\sigma^2}2 dt + \sigma p_t dW_t\,.
\end{align}
However the function $(q,p) \mapsto p^2$ is not bounded below by
$(q,p) \mapsto H(q,p)$ since the two functions are not comparable.
This prevents us from obtaining the desired bound.
If $U(q)$ only has positive powers of $q$ that are greater or equal
to two, this deficiency can be partially overcome by considering
$V(q,p)=H(q,p) + \gamma_0 p q$. Then by picking $\gamma_0$ small enough,
we can ensure that $\frac1c H\leq V \leq c H $ as $p^2 + q^2
\rightarrow \infty$ and that $\mathcal{L} V$ is bounded from above by
a constant times $-V+ C$ for some $C>0$. Hence $V$ is comparable to
$H$ but satisfies the desired Lyapunov-function inequality
\eqref{LyapGen}. See \cite{MattinglyStuartHigham02} for more on using
this trick in this context.

Unfortunately this simple trick does not work in the presence
of a singular repulsive term, as it does not yield the required bound for geometric ergodicity when $q$ approaches 0. This is necessary since the potential, and hence
the transition density, behaves poorly near this point and uniform
estimates are not easy (if even possible) to obtain.  It is therefore reasonable to
ask if there is a different choice other than $pq$ that will work yet is
inspired by this example.  Eventually, we will find an appropriate
function $\Psi$ so that $V=H + \Psi$ works; to do so we will leverage
a better understanding the dynamics at large energies. Moreover, this will allow us to learn a different way to 
understand the $pq$ correction than
via the theory of hypocoercivity which it motivated.  In Section~\ref{sec:Hypocoercivity},
we will return to this example which is
connected to the  theory of hypocoercivity, which it partially
inspired, and see how it fits into the approach we have developed.

With this example and its limitations in 
mind, we return to \eqref{Heq} and take a closer look at the
dynamics. Looking at the right hand side, it is true that $p^2$ is not
comparable to $H(q,p)$ at every given point $(q,p)$ in phase
space. Yet if we really believe that the system settles down into equilibrium exponentially fast, the
$-p^2$ term must lead to some ``dissipation''  of energy when the energy is
large.

To see how dissipation arises, it is sufficient to analyze the stochastic dynamics at large energies, which is a regime in which we know something about the dynamics.  To leading order in $H$ it will follow the deterministic dynamics with stochastic fluctuations of lower order. At high energy, the highest order part of the potential $U$ dominates.

For discussion purposes, we will assume for the moment that the potential $U: \H \rightarrow (0, \infty)$ has the simplified form
\begin{align}\label{Umodel}
U(Q)=a Q^{\alpha} + b Q^{-\beta}
\end{align}
for some $a, b >0$ and $ \alpha, \beta  >0 $ with $\alpha>2$.  Later in this section, we will return to the problem when $U(Q)$ has the more general form~\eqref{Udef1}.  It will be convenient to introduce the following family of potentials indexed by a parameter
$\epsilon \in[0,1]$
\begin{align*}
  U_\epsilon(Q)=aQ^{\alpha} + b Q^{-\beta} \epsilon^{1+\frac{\beta}{\alpha} }. 
\end{align*}
Setting $\epsilon=1$ yields the original potential which we will
continue to denote by $U$ without any subscript.  The advantage provided by
considering this family of potentials is that $U_\epsilon(Q)$ has the
following homogeneous scaling property for $h>0$
\begin{align}\label{UScaling}
  U_\epsilon(h^{\frac{1}{\alpha}} Q)=h U_{\frac{\epsilon}{h}}(Q),
\end{align}
and this scaling property will lead to all of the scaling properties mentioned
subsequently.

The orbits of the deterministic trajectories are given by the
solution set of $H_\epsilon(Q,P) = \frac12
P^2+U_\epsilon(Q)= \eta$ for a given energy level $\eta >0$.  This
locus is topologically equivalent to a circle and hence setting
\begin{align}\label{solutionDef}
  \Qcurve_\epsilon(Q,\eta)= \sqrt{2(\eta-U_\epsilon(Q))},
\end{align}
the orbit is given by the set $\{ (Q,\Qcurve_\epsilon(Q, \eta)),
(Q,-\Qcurve_\epsilon(Q, \eta)) : Q
\in [Q^\epsilon_-(\eta),Q^\epsilon_+(\eta)] \}$ where
$Q^\epsilon_+(\eta)$ and $Q^\epsilon_-(\eta)$ are respectively the
largest and smallest positive roots of $\eta-U_\epsilon(Q)=0$. Notice
that model potential we are currently considering always has exactly two
solutions to $\eta-U_\epsilon(Q)=0$.

We will see that  the period of the orbit goes to zero as the energy
goes to infinity. Hence at high energy the system will make many
orbits in an instant of time and the average of $-P^2$
around the deterministic orbits will give a good idea of
the dissipation asymptotically as the energy becomes large. We see
that averaging $P^2$ around this deterministic trajectory gives by symmetry
\begin{align*}
  \langle P^2 \rangle_\epsilon(\eta) =
  2\int_{Q^\epsilon_-(\eta)}^{Q^\epsilon_+(\eta)}   \Qcurve_\epsilon(Q,
  \eta) dQ\, ;
\end{align*}
and similarly that the period $\tau_\epsilon(\eta)$ of this orbit can be expressed as
\begin{align*}
  \tau_\epsilon(\eta)=2\int_{Q^\epsilon_-(\eta)}^{Q^\epsilon_+(\eta)}
  \frac1{\Qcurve_\epsilon(Q, \eta)}dQ .  
\end{align*}

To make the idea of ``large energy'' more precise we consider the
rescaling of phase space defined by the mapping $(Q,P) \mapsto (h^{\frac12}
P,h^{\frac{1}{\alpha}} Q)$ for a scale factor $h >0$. Under
this map, the associated energy will essentially scale by a factor
$h$ for large $h$. However this is not \textit{exactly} correct
since the other terms in the potential do not scale in the same
fashion. However, in light of \eqref{UScaling},  by changing the value of $\epsilon$ we can relate a
scaled Hamiltonian exactly with an unscaled Hamiltonian having
$\epsilon=h^{-1}$; that is, since $H_\epsilon(Q,P) = \frac12 P^2 +
U_\epsilon(Q)$, we see that $H_\epsilon(h^{\frac1{\alpha}} Q,h^{\frac12} P)= h H_{\frac{\epsilon}{h}}(Q,P)$. 
In other words, the scaled system behaves exactly like the
  unscaled system at a higher energy. If we define the average value
  of $P^2$ about an orbit as 
\begin{equation} \label{eq:defn-acal}
	\mathcal{A}_\epsilon(P^2)( \eta)\eqdef\frac{\langle
	P^2\rangle_\epsilon(\eta)}{\tau_\epsilon(\eta)}
\end{equation} 
then we also see that $\mathcal{A}_\epsilon(P^2)( h \eta)=h
\mathcal{A}_{\frac{\epsilon}{h}}(P^2)(\eta)$.


Summarizing, the average of $P^2$ around the deterministic orbit with
 energy $h \eta$ and $\epsilon=1$ is the same as $h$ times the average
 of $P^2$ around the deterministic orbit with energy $\eta$ and
 $\epsilon=h^{-1}$  for the simplified potential considered in this
 section. We will see later that this will hold for sufficiently large
 energy for the more general potential~\eqref{Udef1} as well.
If we define, 
\begin{align}\label{eq:rate}
  \rate(\eta)\eqdef\mathcal{A}_{\frac1{\eta}}(P^2)(1)
\end{align}
 then $\mathcal{A}_1(P^2)(\eta)=
 \eta \rate(\eta) $. Furthermore, observe that as $\epsilon \rightarrow 0$, the level sets under potential
   $U_\epsilon(Q)$ converge (Figure~\ref{Fig:LambdaLevelSets}), and $\mathcal{A}_\epsilon(P^2)( 1)$ converges
 to a positive constant $\rate_*$ as $\epsilon \rightarrow0$. 
As we will see later
\begin{align}\label{rateStar}
  \rate_*=\frac{\int_0^{\widetilde{Q}} \big(1 -a Q^{\alpha}
    \big)^{\frac12}dQ}{\int_0^{\widetilde{Q}} \big(1 - a Q^{\alpha} 
    \big)^{-\frac12}dQ}=\frac{2 \alpha}{\alpha+2}
\end{align}
where $\widetilde{Q}=a^{-\frac{1}{\alpha}}$.  
Notice that $\rate_*$ is
independent of the value of $a$ and since $\alpha > 2$, observe
that $\rate_* \in (1,2)$.

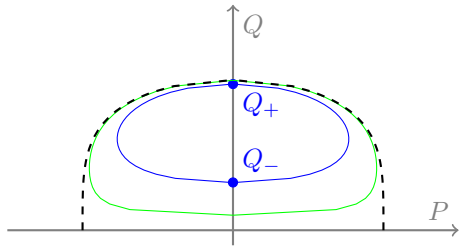
\begin{figure}
\centering
\begin{tikzpicture}[scale=2,domain=-.1:1.5,,rotate=90]
    \draw[->,thick,color=gray] (-.1,0) -- (1.5,0) node[below right] {$Q$};
    \draw[<-,thick,color=gray] (0,-1.5) node[above left] {$P$} -- (0,1.5);
   \draw[color=blue,thin]	plot[id=bottomBlueII,domain=0.3179:0.97245]
   function{-sqrt(abs(1-x**4-.1/x**2))};
\draw[color=blue,thin]	plot[id=topBlueII,domain=0.3179:0.97245]
 function{sqrt(abs(1-x**4-.1/x**2))};  
   \draw[color=blue,fill]  (0.3179 ,0) circle (.2ex);
    \node[above right,color=blue] at  (0.3179 ,0) {$Q_-$};  
    \draw[color=blue,fill]  (0.97245 ,0) circle (.2ex);
    \node[below right,color=blue] at  (0. 97245,0) {$Q_+$};
  \draw[color=green,thin]	plot[id=topGreenII,domain=0.1000050014:.9974778151]
 function{sqrt(abs(1-x**4-.01/x**2))};  
   \draw[color=green,thin]	plot[id=bottomGreenII,domain=0.1000050014:.9974778151]
  function{-sqrt(abs(1-x**4-.01/x**2))};
  \draw[color=black,thick,dashed]	plot[id=topBlackII,domain=0:1]
 function{sqrt(abs(1-x**4))};  
   \draw[color=black,thick,dashed] plot[id=bottomBlackII,domain=0:1]
  function{-sqrt(abs(1-x**4))};
  \end{tikzpicture}
\caption{ Level sets of $H_\epsilon(Q,P)=1$ for $\epsilon$ equals 1 (blue) and
 1/2.15 (green) where $H_\epsilon(Q,P)=\frac{1}{2} P^2+ Q^4 +
 \frac1{10}\epsilon^{\frac{3}{2}} Q^{-2}$  The dashed line is the level set of
 $\frac{1}{2} P^2+ Q^4 =1$ with $P \geq 0$ to which the level sets of
 $H_\epsilon(Q,P)=1$  converge as $\epsilon \rightarrow 0$.
}
\label{Fig:LambdaLevelSets}
\end{figure}

Now since at high energy (i.e. $\eta \gg 1$), $\mathcal{A}_1(P^2)( \eta )= \eta
\rate(\eta)\approx \eta
\rate_*$, it is reasonable to 
approximate \eqref{Heq} by
\begin{align}\label{HeqApprox}
  d H(t) \approx -\gamma\rate_*  H(t) dt + \zeta \, dt + \sigma
  \sqrt{\rate_* H(t)} dW(t)
\end{align}
when $H(t) \gg 1$ where $\zeta >0$ is constant.  Note that $\zeta $ is negligible for $H(t) \gg 1$.  The martingale in \eqref{HeqApprox} was chosen so
that its quadratic variation would be the time average of the quadratic variation of the martingale in \eqref{Heq}.
In making this approximation, we are \textit{not} claiming that there
is averaging in the traditional asymptotic sense. Namely,
there is a small parameter going to zero that causes the
\textit{whole} system
to speed up and hence the instantaneous effect on the system is
increasing in the limit of that averaged parameter.
Rather, at high energy the system acts (after rescaling) increasingly
like a system with order one energy and a rescaled parameter
$\epsilon$. The rescaling also leads to a rescaling  of time so that an
order one time in the rescaled system represents an increasingly short
time in the original system. Hence in a short interval of time at high
energy, one sees the effect of many rotations of the system, making
the averaged quantities just calculated a good approximation. 

In spirit this approach is initially not unlike one used to show
stability of queuing systems and stochastic algorithms
\cite{DupuisWilliams1994,HuangKontoyiannisMeyn2001,Meyn2008}. There a
discrete time (and possibly discrete space) stochastic system is shown
to converge after rescaling to a deterministic ODE which can easily be
shown to be stable. Here we also rescale but do so primarily to
introduce a small parameter (one over the energy) and then use
averaging the study this limiting ODE system with a small
parameter. 

\begin{figure}
\includegraphics[width=5 in]{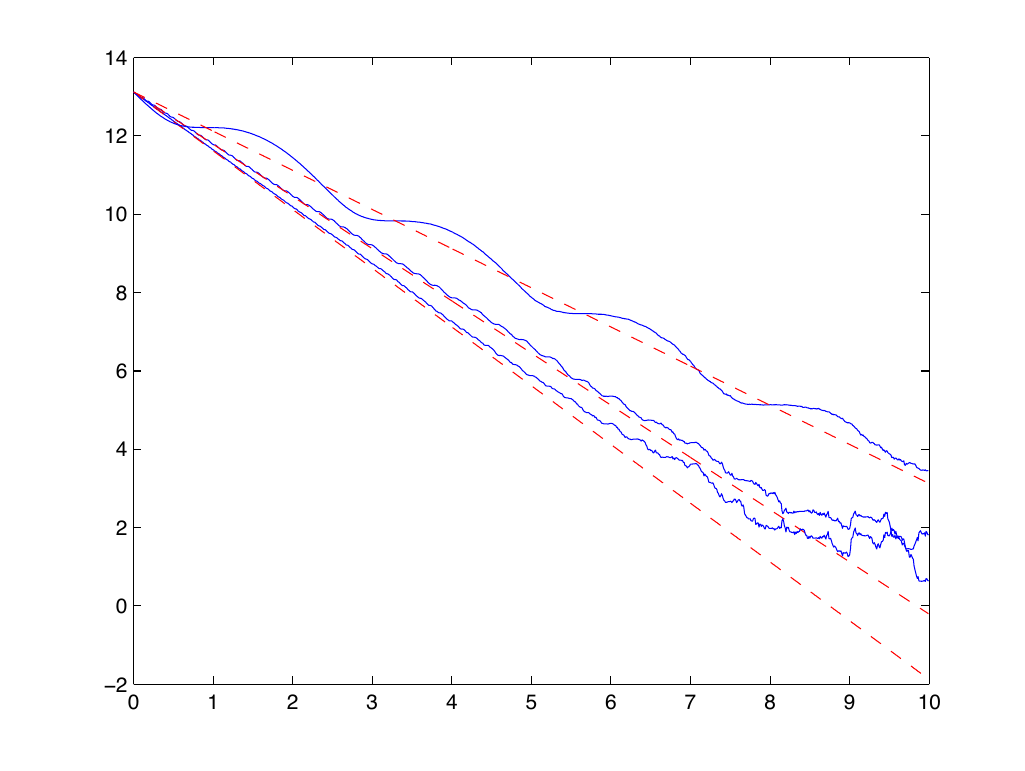}
\caption{The first three plots are semi-log plots of energy versus
 time for the dynamics using the potential in \eqref{Umodel} with
 $\alpha$ equal to 2 (upper most curve), 4 (middle curve), and 6
 (lower most curve).The solid lines are
 numerical simulations and the dashed lines are the theoretical
 prediction made by \eqref{HeqApprox}. }
\label{fig:numerics}
\end{figure}
Before  making this intuition more formal in Section~\ref{ApproxDyn},
we will present some numerical experiments which show that the above
calculations capture the ``truth'' of what is going on. We will see the
give the observed rate of energy dissipation at high energies.
\subsection{Numerical explorations}
\label{sec:Numericalexplorations}

The plots in Figure~\ref{fig:numerics} compare
the trajectory of the energy predicted by \eqref{HeqApprox} and the
energy trajectory obtained from a numerical simulation of \eqref{sde1}
when both were started from the same initial high energy level. The
model potential given in \eqref{Umodel} was used with $\alpha \in \{2,
4,6\}$ and $\beta =12$.   Similar comparisons with  $\beta$ equal
to $2$ and $4$ were also made with nearly identical plots confirming
essentially no dependence on $\alpha$ as predicted by our asymptotic
theory. 

Our theory only applies to the two cases $\alpha \in \{4,6\}$ since
the theory requires $\alpha>2$. In
these cases the agreement with the theory, shown with the dashed line,
is quite good. One can see a small scale wiggle in the numerical
curves. This is the effect of the periodic orbit. As the scaling
theory predicts, the effect decreases as the energy increases since
the scaling shows that period and the size of the fluctuations go to
zero as the energy increases. When $\alpha=2$ our theory does not
apply. Nonetheless, the trend given by dotted line is
followed. However one sees that period and amplitude of the
fluctuation is not going to zero which is also consistent with the
scaling arguments predictions. The possibility of extending our theory
to this boundary case is discussed in Section~\ref{sec:conclusion}.

\subsection{Definition of the Lyapunov function}
Informed by the preceding discussion, we return to the idea of
constructing a Lyapunov function $V$ of the form $V=H+\Psi$, where
  $\Psi$ is introduced to handle the singularity in $H$.  The end
  result of this section, in particular, will be the definition of the
  corrector $\Psi$.  First, however, we will take time to both
  motivate and explain how we arrived at this definition.

  As discussed in Section~\ref{Heuristics}, at high energy the system
  moves essentially around the deterministic orbit defined by the
  Hamiltonian flow. The average dissipative effect of each of these orbits is
  given by the average of the right hand side of \eqref{Heq} around
  one orbit. In the language of \eqref{eq:defn-acal}, this is
  $-\mathcal{A}_1 (P^2)(h)+ \frac{\sigma^2}{2}$ if the energy
  equals $h$. To replace the $-p^2$ from \eqref{Heq} with
  $-\mathcal{A}_1 (P^2)(h)$, the theory of
homogenization and averaging suggest the use of the ``corrector'' $\Psi$
defined by Poisson equation
  \begin{align*}
    \mathcal{H} \Psi(q,p) = \gamma(p^2 - \mathcal{A}_1 (P^2)(H(p,q)))\,.
  \end{align*}
where  $\mathcal{H}$ is the Liouville operator defined below. This can
also be thought of as an ``integration by parts'' adapted to
deterministic Hamiltonian dynamics in this setting, in the sense that 
\begin{align*}
  \int_0^t \gamma P_s^2\, ds = \Psi(P_t,Q_t) - \Psi(Q_0,P_0) + t \gamma \mathcal{A}_1 (P^2)(H(Q_0,P_0))\,.
\end{align*}
The first two terms on the righthand side of the equation above are boundary terms which control the
fluctuations from the mean value.

This is the argument used in
\cite{HairerMattingly:2009}, where a succession of Poisson equations
was employed to produce a sequence of correctors to reduce the fluctuations
in various terms, achieving a function which was pointwise
dissipative/coercive.  In many ways the situation here is simpler
than in \cite{HairerMattingly:2009} and the presentation
clearer. However, we will see that a number of needed estimates proved
elusive in this simple program as presented above. We will need to
modify the above arguments by combining them with ideas the works~\cite{AKM12, HerMat15i, HerMat15ii}.

 \subsubsection{The basic idea}
 \label{secBasicIdea}
We begin by introducing the Liouville operator $\mathcal{H}$ associated with the deterministic dynamics given by 
\begin{align}\label{eq:genH}
\mathcal{H} \eqdef P \partial_Q - U'(Q) \partial_P .
\end{align}   
Recalling that the full stochastic dynamics at large energies is approximately determined by the dynamics along $\mathcal{H}$, ideally we would like to pick the corrector $\Psi$ so that it satisfies the following two properties:
\begin{itemize}
\item[(I)] $\Psi(Q,P)\in C^2(\H: \R)$ and $\Psi$ satisfies the following PDE on $\H$
\begin{align}\label{eq:ideaPoison}
(\mathcal{H} \Psi)(Q,P) = \gamma(P^2- \mathcal{A}_1(P^2)(Q,P))
\end{align} 
where $\mathcal{A}_\epsilon(P^2)( \eta)$ is the averaging operator defined by \eqref{eq:defn-acal} discussed in Section~\ref{Heuristics} and we have introduced the slight abuse of notation
\begin{align*}
  \mathcal{A}_\epsilon(P^2)(Q,P)\eqdef \mathcal{A}_\epsilon(P^2)(H_\epsilon(Q,P))\,.
\end{align*}
\item[(II)]  $\Psi(Q,P)$ is ``asymptotically dominated" by $H(Q,P)$ as $H(Q,P)\rightarrow \infty$, i.e., $\Psi$ satisfies
\begin{align*}
\Psi(Q,P) = o(H(Q,P)) \,\, \text{ as } \, \, H(Q,P) \rightarrow \infty.  
\end{align*} 
\end{itemize}
In a moment, we will remark as to why we need to slightly weaken
property (I) here, but for now let us assume that such a $\Psi$
satisfying (I) and (II) exists, as the essential structure of the
argument that follows will still be employed.

Recall that that the generator $\mathcal{L}$ of the process defined by
\eqref{sde1} can be written as
\begin{align*}
\mathcal{L}= \mathcal{H}- \gamma p \, \partial_p + \frac{\sigma^2}{2}\, \partial_p^2 .
\end{align*}
As mentioned above, we will choose the Lyapunov function $V$ to be 
$V=H+\Psi$.  Since
$\Psi$ satisfies the PDE in equation \eqref{eq:ideaPoison} of property
(I), $\mathcal{A}_1(P^2)( \eta) =
\rate(\eta)  \eta$ and $\mathcal{H} H =0$, we have that
\begin{align}\label{Veq}
  dV(q_t,p_t) = (\mathcal{L}V)(q_t,p_t)dt + dM_t,
\end{align}
where $M_t$ is a local martingale and 
\begin{multline}\label{LV}
  (\mathcal{L}V)(q,p) =- {\gamma}(\rate\circ H)(q,p)H(q,p) +
  \frac{\sigma^2}{2} - \gamma p \frac{\partial \Psi}{\partial
    p}(q,p) + \frac{\sigma^2}{2} \frac{\partial^2\Psi}{\partial
    p^2}(q,p) \,.
\end{multline}
The first two terms of the right-hand side of \eqref{LV} essentially
coincide with \eqref{HeqApprox}; therefore, to realize our goal we
would need to show that the
remaining terms on the right-hand side are negligible at large
energies. 

To see intuitively why we expect these terms to be negligible at large energies, set $\beta=0$ in the 
potential $U(q)$ for simplicity and note that the operator $\mathcal{H}$ scales homogeneously of 
degree $\frac12-\frac1\alpha$ under the transformation $(P,Q)\mapsto
(h^{\frac12}P,h^{\frac1\alpha}Q)$. Also, notice that the Hamiltonian $H$
scales homogeneously of degree $1$ under this transformation. Since the right hand side of
\eqref{eq:ideaPoison} scales homogeneously of degree $1$ under the same
transformation, we expect the corrector $\Psi$ to scale like $h^{\frac12 +
  \frac1\alpha}$. Since we assumed that $\alpha >2$, we see that
(when $\beta=0$) $\Psi$ is dominated by $H$ at large energies just from
this argument. Similarly, we expect $ P\partial_P
  \Psi$ and  $\partial^2_P\Psi$ to scale respectively like $h^{\frac12 +
  \frac1\alpha}$ and $h^{\frac1\alpha-\frac12}$ under the same scaling, and
hence are negligible as previously claimed.

When $\beta >0$, however, the situation is more complicated. A nice $C^2$
solution to \eqref{eq:ideaPoison} can still be found, yet
determining its behavior at large energies is more delicate. For large energies where
$q^\alpha$ dominates, the above analysis should still
hold.  For large energies where $q^{-\beta}$ dominates in $U(q)$, one can
change the parameter $\epsilon$ in $U_\epsilon(q)$ from
\eqref{UScaling} to perform a similar scaling analysis for solutions
of \eqref{eq:ideaPoison} with $U$ replaced by $U_\epsilon$. 
More precisely, if one defines $\mathcal{H}_\epsilon$ by \eqref{eq:genH}
with $U'(q)$ replaced by   $U_\epsilon'(q)$, then under the scaling
transformation $(P,Q) \mapsto (h^{\frac12} P, h^{\frac1\alpha} Q)$ we
have that $\mathcal{H}_\epsilon$ transfroms to $h^{\frac12
  -\frac1\alpha} \mathcal{H}_{\epsilon/h}$, which is analogous to how
$\mathcal{H}$ transformed when $U(q)=q^\alpha$, except for the introduction of 
the parameter $\epsilon$. We  then define $\Psi_\epsilon$ as the
solution to \eqref{eq:ideaPoison} with $\mathcal{H}$ replaced by
$\mathcal{H}_\epsilon$. Following the same logic as before, one sees
that $\Psi_\epsilon$ transforms to $h^{\frac12+\frac1\alpha}
\Psi_{\epsilon/h}$ under  $(P,Q) \mapsto (h^{\frac12} P,
h^{\frac1\alpha} Q)$. Similarly,  $ P\partial_P
  \Psi$ and  $\partial^2_P\Psi$  transform to $h^{\frac12 +
  \frac1\alpha} P\partial_P
  \Psi_{\epsilon/h}$ and
  $h^{\frac1\alpha-\frac12}\partial^2_P\Psi_{\epsilon/h}$,
  respectively. Hence we could repeat the same analysis if one had
  uniform control over the size of $\Psi_\epsilon$, $P\partial_P
  \Psi_{\epsilon}$ and $\partial^2_P\Psi_{\epsilon}$ as $\epsilon
  \rightarrow 0$. However, in all cases the rigorous extraction of the 
  needed scaling of the original $\Psi$ or this family of solutions $\Psi_\epsilon$, and in particular the scaling of their derivatives, seems elusive.  For this reason, we will modify the original PDE in \eqref{eq:ideaPoison}  by
introducing an approximate dynamics which will be asymptotically the same
as the dynamics driven by the Hamiltonian but which will scale exactly homogeneously
in the spirit of the previous paragraph. This will allow us to control
the needed terms but it will come with a cost. That is, the resulting
solution $\Psi$ will only be globally continuous and not globally $C^2$. It
will however be piecewise $C^2$ and the ideas from \cite{HerMat15i,
  HerMat15ii} will be exploited to nonetheless prove $H+\Psi$ is a
Lyapunov function for the time $t$ dynamics.

\subsubsection{The relationship to the ``$pq$'' trick and   Hypocoercivity}\label{sec:Hypocoercivity}
We now make a small digression and return to the ``trick'' used in the non-singular case of adding
$\gamma_0pq$ for some choice of positive $\gamma_0$ as discussed in
Section~\ref{Heuristics}.  In light of the construction used in this
paper, it is interesting to ask if $\gamma_0 p q$ is the solution of
an appropriate Poisson equation of the problem with a potential
$U(q)=q^{2n}/(2n)$, since this potential represents the behavior
at infinity of the class of potentials for which that construction is
used. We begin by observing that for the corresponding Liouville
operator $\mathcal{H}$ one has
\begin{align*}
  \mathcal{H}(pq) =  p^2 - q^{2n}= 
  (1+n  )p^2   - n p^2 -  q^{2n}=  (1+n)p^2   - 2n H(p,q).
\end{align*}
Hence multiplying by $\frac\gamma{1+n}$ and calculating that $\mathcal{A}( p^2)(q,p) = \frac{2n}{n+1}H(q,p)$, we see that $\Psi(q,p)= \frac{\gamma}{1+n} pq$ is a
solution to
\begin{align*}
  (\mathcal{H}\Psi)(q,p) = \gamma p^2 - \gamma\mathcal{A}( p^2)(q,p)\,.
\end{align*}
Hence this ``trick'' is exactly a
version of the ideas in this paper, namely solving the correct, asymptotically relevant Poisson
equation. It would be interesting to understand how this point of view
fits together with the ideas contained in the theory of hypocoercivity  as
developed by C. Villani \cite{Villani_2009} and subsequent authors \cite{2013arXiv1308.4938B, MR3324910,MR3413926,MR3522857}.

\subsubsection{The approximate dynamics}\label{ApproxDyn}
Rather than using the trajectories defined by the full Hamiltonian $H$
to build the corrector $\Psi$ via the method of characteristics, we
will use the trajectories defined by a ``piecewise Hamiltonian".  This
has the advantage of simplifying, yet capturing the dynamics at large
energies in various regions in the state space $\H$.  This, in
particular, will allow for easier analysis of our chosen corrector, as
the PDEs satisfied by $\Psi$ locally in various regions in $\H$ will
be far simpler than the equation \eqref{eq:ideaPoison} in property (I).

To introduce the approximate dynamics, recall that
\begin{align*}
H(Q,P)=\frac{P^2}{2}+U(Q)= \frac{P^2}{2}+ \sum_{i=1}^{l} a_i Q^{\alpha_i}
\end{align*}
where $\alpha_1 >2, a_1>0, a_l >0$, $\alpha_l <0$ and
\begin{align*}
\alpha_1 > \alpha_2 > \cdots > \alpha_l .  
\end{align*}
Because two parts in $U(Q)$ will play a special role throughout the rest of the paper, we let $\alpha_1= \alpha$, $a_1=a$, $\alpha_l=-\beta$, $a_l=b$ for simplicity.  For $(Q,P) \in \H$ let 
\begin{align}
\label{eqn:sham}
K(Q,P)= \frac{P^2}{2}+ b Q^{-\beta} \,\, \text{ and }\,\, J(Q,P)= \frac{P^2}{2}+ a Q^\alpha,  
\end{align}
and for $\xi_*, h_*>0$ define the following regions in the state space $\H$:     
\begin{align*}
\mathcal{S}_1(\xi_*, h_*)&= \{ (Q,P) \in \mathbb{H}\,: \, P^2 Q^{\beta}\leq \xi_*^2, \, Q< 1, K(Q,P) \geq k(h_*) \} \\
\mathcal{S}_2(\xi_*, h_*)&= \{ (Q,P) \in \mathbb{H} \, : \,  P^2 Q^{\beta}\geq  \xi_*^2,   P^2 Q^{-\alpha}\geq  \xi_*^2, \, H(Q,P) \geq h_*\}\\
\mathcal{S}_3(\xi_*, h_*)&= \{ (Q,P) \in \mathbb{H}  \, :  \, P^2 Q^{-\alpha} \leq \xi_*^2, \, Q>1, \, J(Q,P) \geq j(h_*)   \}
\end{align*}
where $k(h)$ and $j(h)$ are boundary functions to be introduced momentarily.  Both of the parameters $\xi_*, h_*$ should be thought of as large, and we will see soon that $k(h), j(h)\approx h$ for $h>0$ large.  The parameter $\xi_*>0$ will be increased at several instances throughout the paper.  Moreover, we will often choose the parameter $h_*$ to depend on $\xi_*$.  

To help motivate the regions above, observe that as $H(Q,P) \rightarrow \infty$ with $(Q,P)\in \mathcal{S}_1(\xi_*, h_*)$ we have  
\begin{align*}
H(Q,P) = Q^{-\beta} \bigg[ \frac{P^2 Q^\beta}{2} + b+o(1)\bigg] = K(Q,P) + Q^{-\beta} o(1)
\end{align*}
and as $H(Q,P)\rightarrow \infty$ with $(Q,P) \in \mathcal{S}_3(\xi_*, h_*)$
\begin{align*}
H(Q,P) = Q^\alpha \bigg[\frac{P^2 Q^{-\alpha}}{2} +a + o(1) \bigg]= J(Q,P) + Q^\alpha o(1).  
\end{align*}
Since $P^2 Q^\beta$ is bounded on $\mathcal{S}_1(\xi_*, h_*)$ and $P^2 Q^{-\alpha}$ is bounded on $\mathcal{S}_3(\xi_*, h_*)$, this calculation suggests that we should take the approximate dynamics in $\mathcal{S}_1(\xi_*, h_*)$ to be the dynamics determined by the Hamiltonian $K(Q,P)$.  
 Similarly in $\mathcal{S}_3(\xi_*, h_*)$, we should take the approximate dynamics to be the dynamics determined by the Hamiltonian $J(Q,P)$.  The region $\mathcal{S}_2(\xi_*, h_*)$ corresponds to an asymptotically insignificant piece of the dynamics at large energies when $\xi_*$ is also large, and therefore should serve merely as a ``transition zone" between two other regimes, $\mathcal{S}_1(\xi_*, h_*)$ and $\mathcal{S}_3(\xi_*, h_*)$.  This, in particular, suggests that we maintain the dynamics determined by $H$ in the region $\mathcal{S}_2(\xi_*, h_*)$.

 \begin{remark}\label{rem:Scaling}
   It is also instructive to understand how the analogous regions for
   $\mathcal{H}_\epsilon$ transform under the scaling $(P,Q)\mapsto
(h^{\frac12}P,h^{\frac1\alpha}Q)$. If in the regions $\mathcal{S}_i$ we replace $P^2Q^\beta$ by
$P^2Q^\beta\epsilon^{-1-\frac\beta\alpha}$, this then defines
correct regions $\mathcal{S}_i^\epsilon$ corresponding to 
$\mathcal{H}_\epsilon$ (ignoring the truncation for small $H$ for the moment).
Notice that the boundary between
$\mathcal{S}_3^\epsilon$ and $\mathcal{S}_2^\epsilon$ would remain unchanged as $\epsilon
\rightarrow 0$ yet the boundary between $\mathcal{S}_1^\epsilon$ and
$\mathcal{S}_2^\epsilon$ will collapse
towards the $q=0$ axis. Hence as $\epsilon \rightarrow0$, the region
$\mathcal{S}_1^\epsilon$ becomes a vanishingly small part of the phase
space. Furthermore by making $\xi_*$ large we can decrease the
importance of the dynamics in $\mathcal{S}_2^\epsilon$ by making this region smaller. Thus we expect only 
the dynamics in region $\mathcal{S}_3^\epsilon$ to be relevant asymptotically. In  $\mathcal{S}_3^\epsilon$,
the potential $U_\epsilon(q)$ is dominated by $aQ^\alpha$ as $\epsilon
\rightarrow 0$ uniformly and we expect the dynamics governed by the
Hamiltonian $J$ defined above to dominate. We will see that all of
these predictions hold and that they are behind all of the
construction on which we now embark. 
 \end{remark}

To define the approximate dynamics precisely, we need some additional notation.  For $h_0=h_0(\xi_*)>0$ large enough and $h\geq h_0$, let $(Q_1,P_1)=(Q_1(\xi_*, h),P_1(\xi_*, h)) \in \mathcal{S}_1(\xi_*, h_0)$ satisfy
\begin{align*}
H(Q_1, P_1)&= h, \,\, P_1^2=\xi_*^2 Q_1^{-\beta}, \,\,  P_1 >0  
\end{align*}  
and $(Q_3, P_3)= (Q_3(\xi_*, h), P_3(\xi_*, h)) \in \mathcal{S}_3(\xi_*, h_0)$ satisfy
\begin{align*}
H(Q_3, P_3)&= h, \,\, P_3^2=\xi_*^2 Q_3^{\alpha}, \,\,  P_3 >0.  
\end{align*} 
From the asymptotic observations made above, we note that as $h\rightarrow \infty$
\begin{align*}
Q_1^{-\beta}\bigg[ \frac{\xi^2_*}{2}+ b +o(1)\bigg]= h\,\, \text{ and }\,\, Q_3^\alpha \bigg[ \frac{\xi_*^2}{2}+ a +o(1)\bigg] =h.  
\end{align*}
Now, for $h\geq h_0$, $h_0=h_0(\xi_*)>0$ large enough, define $k(h), j(h)>0$ by
\begin{align*}
k(h) = h -  \sum_{i=1}^{l-1} a_i Q_1^{\alpha_i}, \qquad j(h)= h - \sum_{i=2}^l a_i Q_3^{\alpha_i}  
\end{align*}
and notice that 
\begin{align*}
\lim_{h\rightarrow \infty} h^{-1}k(h) = \lim_{h\rightarrow \infty} h^{-1} j(h) = 1.    
\end{align*}
By perhaps again increasing $h_0$ if necessary, also observe that for all $i\neq j$
\begin{align*}
\text{interior}(\mathcal{S}_i(\xi_*, h_0)) \cap \text{interior} (\mathcal{S}_j(\xi_* ,  h_0)) = \emptyset.  
\end{align*} 
Setting 
\begin{align*}
\mathcal{S}_i^+ (\xi_*, h_0) &= \mathcal{S}_i (\xi_*, h_0) \cap \{(Q,P) \in \H \,: \, P\geq 0 \}\\
\mathcal{S}_i^{-}(\xi_*, h_0)&= \mathcal{S}_i(\xi_*, h_0) \cap \{(Q,P) \in \H \,: \, P\leq 0 \},
\end{align*}
with this choice of $h_0$ we have sketched the regions $S_j(\xi_*, h_0)$ in Figure~\ref{fig:regionplot}.

\begin{figure}
\centering
\begin{tikzpicture}[xscale=.5, yscale=1.6]

\draw[thick,color=black,opacity=.75] (-11,-.5) -- (11,-.5) node[right] {$P$};
\draw[thick, color=black,opacity=.75] (0, .-.4) -- (0, -.6)
node[below]{$0$};
\draw[thick, color=black,opacity=.75] (-10.6, -.2) -- (-11.4, -.2)
node[left]{$0$};
\draw[dashed] (-11, -.2) --(11, -.2);

\draw[dashed] (0,-.4)--(0,2.5);

\draw[thick,color = black,opacity=.75] (-11,-.5) -- (-11,2.5) node[left] {$Q$};


\draw[thick, red, smooth,samples=100,domain=.3535:1.675] plot({(40-2*\x^4-2/\x^2)^(1/2)}, {\x});
\draw[thick, red, smooth,samples=100,domain=.3535:1.675] plot({-(40-2*\x^4-2/\x^2)^(1/2)}, {\x});
\draw[thick, blue, smooth,samples=100,domain=1.675:2.1063] plot({(39.357-2*\x^4)^(1/2)}, {\x});
\draw[thick, blue, smooth,samples=100,domain=1.675:2.1063] plot({-(39.357-2*\x^4)^(1/2)}, {\x});
\draw[thick, green, smooth,samples=100,domain=.2238:0.3535] plot({(40.012-2/\x^2)^(1/2)}, {\x});
\draw[thick, green, smooth,samples=100,domain=.2238:.3535] plot({-(40.012-2/\x^2)^(1/2)}, {\x});
\draw[thick, smooth,samples=100,domain=.16:.3585] plot({-3^(1/2)/\x}, {\x});
\draw[thick, smooth,samples=100,domain=.16:.3585] plot({3^(1/2)/\x}, {\x});
\draw[thick, smooth,samples=100,domain=1.675:2.5] plot({3^(1/2)*\x^2}, {\x});
\draw[thick, smooth,samples=100,domain=1.675:2.5] plot({-3^(1/2)*\x^2}, {\x});


\node at (-1,2.3) {$\mathcal{S}_3^-$};

\node at (-5,2.3) {$\Psi_3^- + \tfrac12{\overline{\Psi}_1^-} + \overline{\Psi}_2^-$};

\node at (5, 2) {$\mathcal{S}_3^+$};

\node at (2.2, 2.3) {$\Psi_3^+ - \tfrac12{\overline{\Psi}_3^+}$};

\node at (7,.6) {$\mathcal{S}_2^+$};

\node at (8,1.5) {$\Psi_2^+  +  \tfrac12{\overline{\Psi}_3^+}$};

\node at (-6.5,1.6) {$\mathcal{S}_2^-$};

\node at (-8,.8) {$\Psi_2^- + \tfrac12{\overline{\Psi}_1^-}$};

\node at (-7,.03) {$\mathcal{S}_1^-$};

\node at (-3,.03) {$\Psi_1^- - \tfrac12{\overline{\Psi}_1^-} $};

\node at (1.1, .04) {$\mathcal{S}_1^+$};

\node at (5, .04) {$\Psi_1^+ + \overline{\Psi}_2^+  + \tfrac12{\overline{\Psi}_3^+}$};

\end{tikzpicture}
\caption{The regions $\mathcal{S}_i$, $i=1,2,3,4$, are plotted above along with the form of $\Psi$ in each region.   The rotation
  along a cycle $\Gamma(h)$ for the approximate dynamics is in the
  counterclockwise direction.  Thus boundary contributions $\overline{\Psi}_i^\pm$ accumulate in the clockwise direction.  The specific choice of $\Psi$ was made so that $\Psi(Q,0)=0$ to exploit the symmetry in the problem.  Also note that, in light of
  Remark~\ref{rem:Scaling}, we expect for large energy and large
  $\xi_*$ that only the dynamics in the region $\mathcal{S}_3$
  will be relevant.    
}
\label{fig:regionplot}
\end{figure}
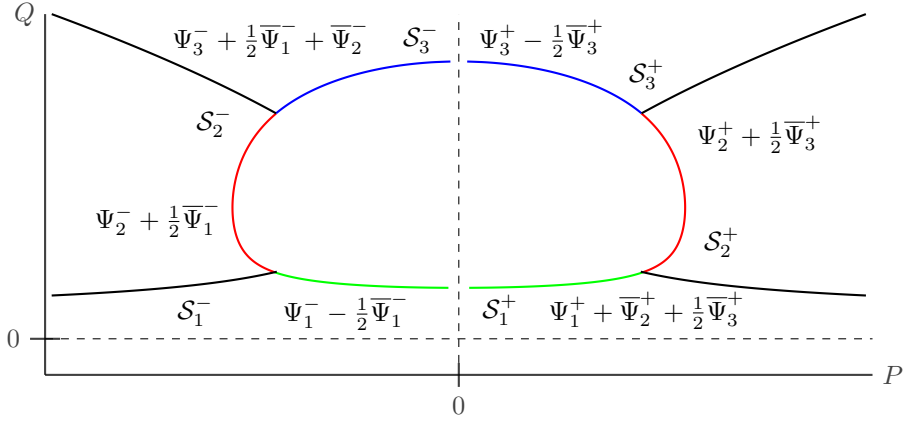

We can now define the approximate dynamics.  For simplicity, set $\mathcal{S}_i=\mathcal{S}_i(\xi_*, h_0)$, $\mathcal{S}_i^\pm= \mathcal{S}_i^\pm(\xi_*, h_0)$ and $\H_{h_0}= \bigcup \mathcal{S}_i$.    

\begin{definition}[The Approximate Dynamics]
\label{def:approxdyn}
For $(Q,P) \in \H_{h_0}$, the \emph{approximate dynamics started from} $(Q,P)$ is the solution of the differential equation
\begin{align*}
(Q_t, P_t)=(Q,P) + \int_0^t X(Q_s, P_s) \, ds
\end{align*}
where $X: \H_{h_0} \rightarrow \R^2$ is given by 
\begin{align*}
X(Q,P) = \begin{cases}
(P, b\beta Q^{-\beta-1}) & \text{ if } \, (Q,P) \in \mathcal{S}_1 \setminus  \mathcal{S}_2^+ \\
(P, -U'(Q)) & \text{ if } \, (Q,P) \in \mathcal{S}_2^+ \setminus \mathcal{S}_3\\
(P, -a\alpha Q^{\alpha-1})& \text{ if } (Q,P) \in \mathcal{S}_3\setminus \mathcal{S}_2^-  \\
(P, -U'(Q)) & \text{ if } \, (Q,P) \in \mathcal{S}_2^- \setminus \mathcal{S}_1 \end{cases}.  
\end{align*}
\end{definition}

One can check that for initial conditions $(Q,P) \in \H_{h_0}$, the approximate dynamics started from $(Q,P)$ has a unique solution with a corresponding continuous solution curve $\Gamma(h)$, where $H(Q,P)=h$, given by the union of the following curves
\begin{align*}
\Gamma_1(h) &= \{(Q,P) \in \mathcal{S}_1\,: \, K(Q,P)=k(h) \}\\
\Gamma_2(h) & =  \{(Q,P) \in \mathcal{S}_2\,: \, H(Q,P)=h \}\\
\Gamma_3(h)&=  \{(Q,P) \in \mathcal{S}_3\,: \, J(Q,P)=j(h) \}.
\end{align*}

\subsubsection{Poisson equations and $\Psi$}
\label{sec:PoisonEquations}
Using the approximate dynamics, we will now define the corrector
$\Psi$.  We begin by defining the transport operators corresponding to
flow generated by the approximate dynamics defined above. In other
words, they are the first order differential operators whose
characteristics correspond to the approximate dynamics. 
Defining the operators $\mathcal{K}$ and $\mathcal{J}$ by
\begin{align*}
\mathcal{K}&\eqdef P\partial_Q + \beta b Q^{-\beta-1} \partial_Q\\  
\mathcal{J}&\eqdef P\partial_Q - \alpha a Q^{\alpha-1} \partial_Q.  
\end{align*}
we see that transport generated by the operator $\mathcal{K}$
corresponds to the flow of the approximate dynamics in
$\mathcal{S}_1\setminus \mathcal{S}_1 \cap \mathcal{S}_2^+$ while the
transport generated by the operator $\mathcal{J}$ corresponds to the
approximate dynamics in
$\mathcal{S}_3\setminus \mathcal{S}_2^- \cap \mathcal{S}_3$.  We recall
that in the remaining regions in $\H_{h_0}$, the dynamics is that determined by the
full Liouville operator $\mathcal{H}$.

For $h\geq h_0$ and $l=1,2,3$, we let $G_l(h)$ denote the total time spent by the approximate dynamics in $\mathcal{S}_l$ during one complete cycle on $\Gamma(h)$, and define $T(h)= \sum_{j=1}^3 G_l(h) .$  For $l=1,2,3$ and $h\geq h_0$, we let $F_l(h)$ be given by  
\begin{align*}
F_l(h) &\eqdef \int_0^{T(h)}  \mathbf{1}_{\mathcal{S}_l}(Q_s, P_s) P_s^2 \, ds   
\end{align*}
where in the above $\mathbf{1}_{\mathcal{S}_l}$ denotes the indicator function on $\mathcal{S}_l$, $(Q_s, P_s)$ corresponds to the coordinates of the approximate dynamics, and we are taking as our initial condition any point $(Q_0,P_0)$ belonging to $\Gamma(h)$.  For positive parameters $c_l^+$ and $c_l^-$, $l=1,2,3$, and $h\geq h_0$, we define the weighted averages $\mathcal{A}_+(h)$ and $\mathcal{A}_-(h)$ by  
\begin{align}
\label{eqn:defApm}
\mathcal{A}_\pm(h) \eqdef\frac{\sum_{i=1}^3 c_i^\pm F_i(h)}{\sum_{i=1}^3 c_i^\pm G_i(h)}.  
\end{align}

\begin{remark}
Observe that $\mathcal{A}_\pm(h)$ are slight modifications of the average $\mathcal{A}_1(P^2)(h)$ of $P^2$ over one cycle of the deterministic dynamics defined by the full Hamiltonian $H$.  More precisely, they are weighted versions (with weights $c_l^\pm$) of the average of $P^2$ over one cycle of the approximate dynamics.  Later we will see that for every $\epsilon >0$ there exists $\xi_*>0$ large enough such that for all $h\geq h_0=h_0(\xi_*)$ large enough
\begin{align*}
1-\epsilon \leq \frac{\mathcal{A}_1(P^2)(h)}{\mathcal{A}_\pm(h)} \leq 1+\epsilon .   
\end{align*}   
More specifically, we will see that the asymptotically dominant part of $\mathcal{A}_\pm(h)$ is $c_3^\pm F_3(h)/c_3^\pm G_3(h)=F_3(h)/G_3(h)$; that is, the dominant contribution to the dissipation at large energies comes from region $\mathcal{S}_3$. We will need these slight modifications and the parameters $c_l^\pm$ to ensure that $\Psi$ defined below is smooth enough to apply Peskir's extension of It\^{o}'s formula~\cite{Peskir_07} and to deal with the signs of the local time contributions in $d\Psi(q_t,p_t)$ arising because $\Psi$ will not quite be globally $C^2$.                    
\end{remark}

Just like the original dynamics determined by the full Liouville operator $\mathcal{H}$, the function $\Psi$ will be broken into several pieces.  To introduce them, first recall the definitions of $j(h)$ and $k(h)$ introduced after Remark~\ref{rem:Scaling} and note that, by increasing $h_0$ if necessary, the functions $$j:[h_0, \infty) \rightarrow [j(h_0), \infty), \, \, k:[h_0, \infty) \rightarrow [k(h_0), \infty)$$ are twice continuously differentiable with twice continuously differentiable inverse functions $$j^{-1}: [j(h_0), \infty) \rightarrow [h_0, \infty),\,\, k^{-1}: [k(h_0), \infty) \rightarrow [h_0, \infty).$$  Moreover, it can be shown by implicit differentiation of $Q_1(h)$ and $Q_3(h)$ with respect to $h$ that the inverse functions satisfy
\begin{align}
\label{eqn:jk1}
\lim_{h\rightarrow \infty}  (h^{-1} j^{-1}(h)) &= \lim_{h\rightarrow \infty}  (h^{-1} k^{-1}(h))= 1 \\
\label{eqn:jk2}\lim_{h\rightarrow \infty} (j^{-1})'(h) &= \lim_{h\rightarrow \infty} (k^{-1})'(h) =1\\
\label{eqn:jk3}\lim_{h\rightarrow \infty} |h(j^{-1})''(h)| &=  \lim_{h\rightarrow \infty} |h(k^{-1})''(h)|=0.    
\end{align}

We let $\Psi_1^+$ and $\Psi_2^-$ be defined on $\mathcal{S}_1$ as the solutions of the following boundary-value PDEs
\begin{align}
\label{eqn:pois1}
\begin{cases}
&(\mathcal{K} \Psi_1^\pm)(Q,P) = \gamma c_1^\pm\big( P^2-  \mathcal{A}_\pm (k^{-1}(K))\big)\\
&\Psi_1^\pm(Q,P)=0 \,\, \text{ for } P^2 Q^{\beta}= \xi_*^2 , \, P>0 
\end{cases}
\end{align}
where $K=K(Q,P)$ is as in~\eqref{eqn:sham}.  Define $\Psi_2^+$ and $\Psi_2^-$ on $\mathcal{S}_2$ by
\begin{align}
\label{eqn:pois4}
\begin{cases}
&(\mathcal{H} \Psi_2^+)(Q,P) = \gamma c_2^+ \big( P^2- \mathcal{A}_{+}(H)\big)\\
&\Psi_2^+(Q,P)=0 \,\, \text{ for } P^2 Q^{-\alpha}= \xi_*^2 , P>0.  
\end{cases}
\end{align}
and 
\begin{align}
\begin{cases}
&(\mathcal{H} \Psi_2^-)(Q,P) = \gamma c_2^- \big( P^2-  \mathcal{A}_{-}(H)\big)\\
&\Psi_2^- (Q,P)=0 \,\, \text{ for } P^2 Q^{\beta}= \xi_*^2 , P<0
\end{cases}.
\end{align}
Lastly, for $(Q,P) \in \mathcal{S}_3$ define $\Psi_3^+$ and $\Psi_3^-$ as the solutions of
\begin{align}
\begin{cases}
\label{eqn:pois4}
&(\mathcal{J} \Psi_3^\pm)(Q,P) = \gamma c_3^\pm \big( P^2- \mathcal{A}_\pm(j^{-1}(J))\big)\\
&\Psi_3^\pm(Q,P)=0 \,\, \text{ for } P^2 Q^{-\alpha}= \xi_*^2 , P<0
\end{cases}
\end{align}where $J=J(Q,P)$ is as in~\eqref{eqn:sham}.

\begin{remark}
At this point it is helpful to compare the righthand sides of the
equations above with the righthand side of the equation
\eqref{eq:ideaPoison} in property (I).  Because $k^{-1}(K)$ and $j^{-1}(J)$ are asymptotically equivalent to $H$ in, respectively, $\mathcal{S}_1$ and $\mathcal{S}_3$, the only noticeable difference between the two is the presence of the parameters $c_l^\pm$.  However as we will see later, we will be able to choose $c_l^\pm \leq 1$, $i=1,2,3$, arbitrarily close to $1$.  Therefore, due to the asymptotic formula for $\mathcal{A}_\pm$ discussed in the previous remark, the equations satisfied by $\Psi_i^\pm$ approximate, up to a small constant, the equation in (I) when $H\rightarrow \infty$.  We will see that this constant can be made arbitrarily small by first picking the boundary parameter $\xi_*>0$ large enough.          
\end{remark}

Because we have defined $\Psi_i^\pm$ using zero boundary conditions and $\Psi_i^\pm \neq 0$ on the other boundary in its region of definition, we cannot (as may be suggested by the above) by fixing a $+$ or $-$ define our corrector $\Psi$ to simply be $\Psi_i^\pm$ on $\mathcal{S}_i$.  In particular, although we will see that each $\Psi_i^\pm$ is $C^2$ on $\mathcal{S}_i^\pm$, such a choice would mean that $\Psi$ is not globally continuous.  

To see how to obtain the desired global continuity, let $g_l(Q,P)$, $l=1,2,3$, be the first exit time of the approximate dynamics from $\mathcal{S}_l$ started from $(Q,P) \in \mathcal{S}_l(\xi_*, h_0)$ and for $(Q,P) \in \mathcal{S}_l$ define 
\begin{align*}
f_l(Q,P)\eqdef \int_0^{g_l(Q,P)} P_s^2 \,ds 
\end{align*} 
where again we recall that $P_s$ is the momentum coordinate of the approximate dynamics.  Applying the method of characteristics to solve equations~\eqref{eqn:pois1}-\eqref{eqn:pois4} produces the following expressions for $\Psi_i^\pm(Q,P)$:
\begin{align*}
\Psi_1^\pm (Q,P)&= \gamma c_1^\pm \big( \mathcal{A}_\pm (k^{-1}(K)) g_1(Q,P)-f_1(Q,P)\big)\\
\Psi_2^\pm(Q,P)&= \gamma c_2^\pm \big( \mathcal{A}_\pm(H) g_2(Q,P)-f_2(Q,P)\big)\\
\Psi_3^\pm(Q,P)&= \gamma c_3^\pm \big( \mathcal{A}_\pm(j^{-1}(J)) g_3(Q,P)-f_3(Q,P)\big)
\end{align*}
where  $H=H(Q,P)=\frac{P^2}{2}+ U(Q)$, $K=K(Q,P)= \frac{P^2}{2}+ b Q^{-\beta}$ and $J=J(Q,P)=\frac{P^2}{2}+ a Q^{\alpha}$.  Hence for $(Q,P)\in \mathcal{S}_i\cap \Gamma(h)$, the value of $\Psi_i^\pm(Q,P)$, $i=1,2,3,4$, on the boundary where it is nonzero is given by 
 \begin{align}
\nonumber \overline{\Psi}_1^\pm(h)&\eqdef \gamma c_1^\pm \big( \mathcal{A}_\pm(h) G_1(h)-F_1(h)\big)\\
\label{eqn:psi2bar}\overline{\Psi}_2^\pm (h)&\eqdef \frac{\gamma}{2} c_2^\pm \big( \mathcal{A}_\pm(h) G_2(h)-F_2(h)\big)\\
\nonumber\overline{\Psi}_3^\pm(h)&\eqdef \gamma c_3^\pm \big( \mathcal{A}_\pm(h) G_3(h)-F_3(h)\big).
\end{align}

\begin{remark}
Recall that for $h\geq h_0$, $G_l(h)$ denotes the total time spent by the approximate dynamics in $\mathcal{S}_l$ during one complete cycle on $\Gamma(h)$, and
\begin{align*}
F_l(h) = \int_0^{T(h)} \mathbf{1}_{\mathcal{S}_l}(Q_s, P_s) P_s^2 \, ds  
\end{align*}
where $T(h) = G_1(h) + G_2(h) + G_3(h)$ is the time to complete one cycle.  Hence the factor of $\frac{1}{2}$ appears on the righthand side of the expression for $\overline{\Psi}_2^\pm(h)$ above by symmetry since only one half of the trajectory in $\mathcal{S}_2$ is traversed starting in either $\mathcal{S}_2^+$ or $\mathcal{S}_2^-$ upon exiting the domain.  See Figure~\ref{fig:regionplot}.       
\end{remark}

Finally to define $\Psi$, let $\psi\in C^\infty(\R: [0,1])$ satisfy $\psi(x)=0$ for $x\leq 2h_0$ and $\psi(x)=1$ for $x\geq 3h_0$.  

\begin{definition}[Definition of $\Psi$] 
\label{def:PSI}
For $(Q,P)\in \H_{h_0}$ and $h=H(Q,P)$, define 
\begin{align*}
\widetilde{\Psi}(Q,P) = \begin{cases}
\Psi_1^+(Q,P) + \overline{\Psi}^+_2(h) + \frac{1}{2} \overline{\Psi}_3^+(h) & \text{ if } (Q,P) \in \mathcal{S}_1^+\\
\Psi_2^+(Q,P) + \frac{1}{2} \overline{\Psi}_3^+(h) & \text{ if } (Q,P) \in \mathcal{S}_2^+\\
\Psi_3^+(Q,P) - \frac{1}{2}\overline{\Psi}_3^+(h)  & \text{ if } (Q,P) \in \mathcal{S}_3^+\\
\Psi_1^-(Q,P) - \frac{1}{2} \overline{\Psi}^+_1(h) & \text{ if } (Q,P) \in \mathcal{S}_1^-\\
\Psi_2^-(Q,P) + \frac{1}{2} \overline{\Psi}_1^-(h) & \text{ if } (Q,P) \in \mathcal{S}_2^-\\
\Psi_3^-(Q,P) + \frac{1}{2}\overline{\Psi}_1^-(h) + \overline{\Psi}_2^-(h)  & \text{ if } (Q,P) \in \mathcal{S}_3^-
\end{cases}
\end{align*}
For $(Q,P)\in \H\setminus \H_{h_0}$, we define $\widetilde{\Psi}(Q,P)=0$.  The function $\Psi:\H \rightarrow \R$ is defined by 
\begin{align*}
\Psi(Q,P)=
\psi(h)\widetilde{\Psi}(Q,P)  
\end{align*}
where $h= H(Q,P)$.  By increasing $h_0$ if necessary, $\Psi$ is continuous everywhere and satisfies $\Psi(Q,0)=0$. See Remark~\ref{rem:cont-smmoth} for
further elaboration.
\end{definition}

As a visual aid for the reader, we have provided Figure~\ref{fig:regionplot} which plots the regions and gives the form of $\Psi$ in each region.

\begin{remark}\label{rem:cont-smmoth}
  In the Appendix, we will see easily by inspection of the formulas
  derived there that $\Psi_i^\pm(Q,P)$ is $C^2$ on
  $\mathcal{S}_i^\pm$ and that $\overline{\Psi}_i^\pm(h)$ is $C^2$
  for $h\geq h_0$.  In particular, $\Psi$ is $C^2$ everywhere
  EXCEPT along the neighboring curves dividing the regions
  $\mathcal{S}_i^\pm$.  In fact if we show that $\Psi$ is globally continuous, we may apply the generalized It\^{o} formula due to Peskir~\cite{Peskir_07}, giving the existence of the It\^{o} differential $d\Psi(q_t, p_t)$.  
  
  To see that $\Psi$ is continuous along these neighboring curves, it is helpful to consider the diagram in Figure~\ref{fig:regionplot} which gives the definition of $\Psi$ in each region.  First observe that since $\Psi_1^+=0$ and $\Psi_2^+= \overline{\Psi}_2^+$ on the boundary $\mathcal{S}_1^+\cap \mathcal{S}_2^+$, we find that for $(q, p) \in \mathcal{S}_1^+ \cap \mathcal{S}_2^+$
  \begin{align*}
  \lim_{\substack{(Q,P) \rightarrow (q, p)\\ (Q,P) \in \mathcal{S}_1^+}} \Psi(Q,P) =  \lim_{\substack{(Q,P) \rightarrow (q, p)\\ (Q,P) \in \mathcal{S}_2^+}} \Psi(Q,P) = \overline{\Psi}_2^+(h) + \frac{1}{2}\overline{\Psi}_3^+(h).  
  \end{align*}  
  where $h= H(q, p)$.  Similar observations will show that $\Psi$ is continuous along the boundaries $\mathcal{S}_2^+ \cap \mathcal{S}_3^+$, $\mathcal{S}_1^- \cap \mathcal{S}_2^-$ and $\mathcal{S}_2^- \cap \mathcal{S}_3^-$.  This leaves us to check that $\Psi$ is continuous at $P=0$.  To see this, first observe that by using the formulas~\eqref{eqn:psi2bar} and~\eqref{eqn:defApm}, for $h\geq h_0$ 
\begin{align*}
\frac{1}{2}\overline{\Psi}_1^{\pm}(h) + \overline{\Psi}_2^{\pm}(h) + \frac{1}{2}\overline{\Psi}_3^{\pm}(h) &= \frac{1}{2}\gamma \sum_{i=1}^3 \mathcal{A}_{\pm}(h) c_i^\pm G_i(h) -c_i^\pm F_i(h)\\
&=\frac{1}{2}\gamma \sum_{i=1}^3 (c_i^\pm F_i(h) - c_i^\pm F_i(h))=0.  
\end{align*}
For $(q,p) \in \mathcal{S}_1 \cap \{ (Q,P) \in \H \, : \, P=0\}$ and $h= H(q, p)$, we find that 
\begin{align*}
\Psi_1^-(q, p) - \frac{1}{2}\overline{\Psi}^-_1(h) =  \frac{1}{2}\overline{\Psi}^-_1(h) -  \frac{1}{2}\overline{\Psi}^-_1(h)=0
 \end{align*}
 and 
 \begin{align*}
 \Psi_1^+(q, p) + \overline{\Psi}_2^+(h) + \frac{1}{2}\overline{\Psi}^+_3(h) = \frac{1}{2}\overline{\Psi}_1^{+}(h) + \overline{\Psi}_2^{+}(h) + \frac{1}{2}\overline{\Psi}_3^{+}(h)=0.  
 \end{align*}
 This now implies continuity of $\Psi$ on $\mathcal{S}_1 \cap \{ (Q,P) \in \H \, : \, P=0\}$.  A similar calculation shows that $\Psi$ is continuous on $\mathcal{S}_3 \cap \{ (Q,P) \in \H \, : \, P=0\}.$  \end{remark}


\section{Consequences of Lyapunov structure}\label{sec:consequences}

In this section, we reduce the proof of the main theorem, Theorem~\ref{MainThm}, to the proofs of Theorem~\ref{thm:Lyap} and Theorem~\ref{thm:LyapPr} below.  As we will see in the following section, both theorems will be immediate consequences of Lemma~\ref{lem:Lyappsi}, a result encapsulating the needed properties of the corrector $\Psi$ as given in Definition~\ref{def:PSI}.  

\begin{theorem}\label{thm:Lyap}
Let $\epsilon >0$.  Then there exists $\xi_*>0$ and $h_0(\xi_*)>0$ large enough such that the functions $\Psi$ and $V= H+ \Psi$ satisfy the following:
   \begin{itemize}
  \item[(a)]  As $H(q,p) \rightarrow \infty$, $\Psi(q,p) = o(H(q,p)).$    
   \item[(b)] The It\^{o} differential of $\Psi(q_t, p_t)$ exists.  Furthermore, there exists a constant $C>0$ such that    \begin{align}
   \label{eqn:basicgeo}
  d V(q_t,p_t) &\leq -\gamma( \rate_*-\epsilon) V(q_t,p_t) \, dt + C dt +  dM(t)   
  \end{align}
for some $L^2$-martingale $M(t)$ with quadratic variation $\llangle M\rrangle_t$ satisfying 
\begin{align*}
\llangle M \rrangle_t = \sigma^2 \int_0^t  p_s^2  \, ds +  \int_0^t \Sigma(q_s, p_s) \, ds 
\end{align*}     
where $\Sigma: \H \rightarrow \R$ is locally bounded, measurable with $\Sigma(q,p) =o(H(q,p))$ as $H\rightarrow \infty$. 
  \end{itemize}
     \end{theorem}

\begin{remark}
The usage of the boundary parameters $\xi_*$ and $h_0=h_0(\xi_*)$ in the statement above allows us to tune the corrector $\Psi$ so as to get close to the predicted large energy dissipation constant $\gamma \Lambda_*$ for the Hamiltonian $H$, as discussed heuristically and numerically in Section~\ref{LyapOverview}.   
\end{remark}

Theorem~\ref{thm:Lyap} has the following immediate corollaries.
 
\begin{corollary}\label{globalExist}
Let $\tau_{\H}$ be the first exit time of $(q_t, p_t)$ from $\H$.  Then for all initial conditions $(p_0,q_0) \in \H$, $\tau_\H = \infty$ almost surely. Hence the
  local in time solutions to \eqref{sde1} for $(p_0,q_0) \in \H$
  provided by the standard theory are in fact global in time solutions
  contained in $\H$ for all time with probability one.
\end{corollary}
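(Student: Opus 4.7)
The plan is to use the Lyapunov function provided by Theorem~\ref{Lyap} to show that $H(q_t,p_t)$ cannot blow up in finite time, from which non-exit of $\H$ will follow. The key structural observation is that the boundary $\partial \H = \{Q = 0\}$ is precisely the locus of the repulsive singularity in $U$: since $a_K > 0$ and $\alpha_K < 0$, $U(q;\lambda)\to\infty$ both as $q \to 0^+$ (through the repulsive term) and as $q \to \infty$ (through the dominant $a_1 |q|^{\alpha_1}$ term, as $\alpha_1 > 2$). Hence every sublevel set $\{H \leq n\}$ is a compact subset of $\H$, bounded away from $\{Q=0\}$ and from infinity.

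First I would introduce the stopping times $\tau_n \eqdef \inf\{t \geq 0 : H(q_t,p_t) \geq n\}$. Because sublevel sets of $H$ are compact in $\H$ and $(q_t,p_t)$ must leave every compact subset of $\H$ prior to $\tau_\H$, we have $\tau_n \leq \tau_\H$ and $\tau_\H = \lim_{n \to \infty} \tau_n$. It therefore suffices to prove that $\P(\tau_n \leq t) \to 0$ as $n\to\infty$ for every fixed $t > 0$.

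Next I would apply the differential inequality \eqref{eq:diffInEq}. Since $V \geq 0$, the drift $-\gamma(\rate_*-\delta)V_s$ is nonpositive, so integrating \eqref{eq:diffInEq} up to $t \wedge \tau_n$ yields
\begin{align*}
V_{t \wedge \tau_n} \leq V(q_0,p_0) + C\,(t \wedge \tau_n) + M_{t \wedge \tau_n}.
\end{align*}
On $[0,t \wedge \tau_n]$ the trajectory stays in $\{H \leq n\}$, where $V$ is bounded by $(1+\delta)n+C$; combined with the quadratic variation estimate $\langle M \rangle_t \leq (\sigma^2+\delta)\int_0^t (V_s + C)ds$ from Theorem~\ref{Lyap}, this shows that $M_{\cdot \wedge \tau_n}$ is genuinely an $L^2$ martingale with mean zero. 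Taking expectations then gives
\begin{align*}
\E V_{t \wedge \tau_n} \leq V(q_0,p_0) + Ct.
\end{align*}

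Finally, combining Markov's inequality with the lower bound $V \geq (1-\delta) H - C$ of Theorem~\ref{Lyap} yields
\begin{align*}
\P(\tau_n \leq t) \,\leq\, \P\bigl( V_{t \wedge \tau_n} \geq (1-\delta)n - C \bigr) \,\leq\, \frac{V(q_0,p_0) + Ct}{(1-\delta)n - C},
\end{align*}
which vanishes as $n \to \infty$. Hence $\tau_\H = +\infty$ almost surely, and the local-in-time strong solution of \eqref{sde2} supplied by standard theory (valid on $\H$ where the coefficients are smooth) extends to a global strong solution in $\H$. There is no genuine obstacle beyond bookkeeping once Theorem~\ref{Lyap} is in hand; the only subtlety to watch is justifying $\E M_{t \wedge \tau_n} = 0$, which is why the explicit quadratic-variation bound in Theorem~\ref{Lyap} is invoked rather than just the drift inequality.
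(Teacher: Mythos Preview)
Your argument is correct and follows the same localization-plus-Lyapunov strategy as the paper: introduce stopping times on sublevel sets of $H$, integrate the differential inequality \eqref{eq:diffInEq} up to the stopped time, and conclude that $\P(\tau_n\le t)\to 0$. The only difference is in the final tail estimate: you apply Markov's inequality to $V_{t\wedge\tau_n}$ to get a $1/n$ bound, whereas the paper appeals to Doob's maximal inequality to get $\P(\sup_{s\le t}H_s\ge R)\le C(t)/R^2$ and then invokes Borel--Cantelli. Your route is slightly more elementary (Borel--Cantelli is unnecessary since the events $\{\tau_n\le t\}$ are nested), while the paper's route yields a quantitatively stronger tail bound; either suffices for the stated corollary.
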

\begin{proof}[Proof of Corollary~\ref{globalExist}]
 See, for example, Theorem~2.1 of~\cite{MT_93}.  
 \end{proof}

For the next corollary, we momentarily return to considering the
unreduced system $(\mathbf{p}_t,\mathbf{q}_t)=(q_1(t),q_2(t),p_1(t),p_2(t))$ defined by \eqref{SDE4D}.

\begin{corollary}\label{S-existance}
  Let $\tau_\mathbb{S}$ be the first exit time of $(\mathbf{q}_t, \mathbf{p}_t)$ from $\mathbb{S}$; then for all initial conditions $(\mathbf{q}_0,\mathbf{p}_0) \in \mathbb{S}$, $\tau_{\mathbb{S}} =
  \infty$ almost surely. And hence the local in time solutions to
 \eqref{SDE4D} for $(\mathbf{q}_0,\mathbf{p}_0) \in \mathbb{S}$ provided by the standard theory
  are in fact global in time solutions contained in $\mathbb{S}$ for all time with
  probability one.
\end{corollary}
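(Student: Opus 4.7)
The plan is to reduce everything to the one-dimensional system already treated, using the center-of-mass change of variables from Section~\ref{ModelProblem}. Under the smooth bijection $(q_1,p_1,q_2,p_2)\leftrightarrow(\bar q,\bar p,\tilde q,\tilde p)$, the set $\mathbb{S}$ is precisely $\{(\bar q,\bar p,\tilde q,\tilde p)\in\R^4:\tilde q\neq 0\}$, and by \eqref{eqSystem} the dynamics decouple into two independent sub-systems. It therefore suffices to produce a global solution of \eqref{eqSystem} whose $\tilde q$-component never vanishes and then transform back.

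First I would dispose of the center of mass. Since $\bar p$ satisfies an Ornstein--Uhlenbeck equation and $\bar q$ is its time integral, the coefficients are globally Lipschitz and classical theory gives a unique global strong solution in $\R^2$ with no explosion to worry about.

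Next I would rescale the relative motion to bring it into the form \eqref{sde2}. Setting $(\hat q_t,\hat p_t)\eqdef(2\tilde q_t,2\tilde p_t)$ and $\hat U(R)\eqdef 2U(R)$, one obtains
\begin{align*}
  d\hat q_t &= \hat p_t\,dt,\\
  d\hat p_t &= -\hat U'(\hat q_t)\,dt - \gamma\hat p_t\,dt + 2\sigma\,dW_t.
\end{align*}
The rescaled potential $\hat U$ still satisfies the hypotheses of \eqref{Udef1} (the exponents are unchanged and the leading and trailing coefficients remain strictly positive), and the fluctuation--dissipation relation $\sigma^2=2\gamma T$ becomes $(2\sigma)^2=2\gamma(4T)$, so the rescaled system is an instance of \eqref{sde2} at $\lambda=1$ and temperature $4T$. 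When $\tilde q_0>0$ (equivalently $\hat q_0>0$) Corollary~\ref{globalExist} applies verbatim and yields $\hat q_t>0$ for all $t$ almost surely. The case $\tilde q_0<0$ reduces to the previous one by the reflection $(\tilde q,\tilde p,W)\mapsto(-\tilde q,-\tilde p,-W)$: using that $U$ is even and hence $U'$ is odd, the reflected process satisfies the same SDE with positive initial position and has the same law, so it too stays away from $\{\tilde q=0\}$ a.s.

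Combining the two sub-systems and inverting the change of variables produces a global strong solution of \eqref{SDE4D} whose $\tilde q$-component never vanishes, so $(\mathbf{q}_t,\mathbf{p}_t)\in\mathbb{S}$ for all $t\geq 0$ almost surely, as desired. I do not expect any serious obstacle here: the real analytic work is already packaged into Theorem~\ref{Lyap} and Corollary~\ref{globalExist}, and what remains is the bookkeeping to verify that the trivial rescaling preserves the structural assumptions on $U$ and the noise--dissipation balance, together with the symmetry argument handling $\tilde q_0<0$.
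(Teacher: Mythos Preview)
Your proof is correct and follows essentially the same route as the paper: decouple via the center-of-mass change of variables, handle $(\bar q,\bar p)$ by standard globally-Lipschitz theory, and handle $(\tilde q,\tilde p)$ by invoking Corollary~\ref{globalExist}. You are in fact more careful than the paper's own proof, which (modulo a typographical swap of $(\bar q,\bar p)$ and $(\tilde q,\tilde p)$) omits both the rescaling needed to match the form of \eqref{sde2} and the reflection argument for $\tilde q_0<0$.
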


\begin{proof}[Proof of Corollary~\ref{S-existance}]
The existence of a global solution $(\mathbf{q}_t,\mathbf{p}_t)$  to
\eqref{SDE4D} is equivalent the existence of a global solution
$(\bar{q}_t,\bar{p}_t,\tilde q_t,\tilde p_t)$ which solves
\eqref{eqSystem}. The existence of a global solution to
$(\bar{q}_t,\bar{p}_t)$  follows directly from
Corollary~\ref{globalExist}. Since the pair $(\tilde q_t,\tilde
p_t)$ is independent of $(\bar{q}_t,\bar{p}_t)$, we can consider it
alone. Since it has no singularity, the existence of a global solution  for
$(\bar{q}_t,\bar{p}_t)$ can be found in many places including \cite{MattinglyStuartHigham02}.
\end{proof}

\begin{remark}
To assure that each of the dynamics above is well-defined for all finite times, it is sufficient to take $V=H$.  Indeed, using stopping times we can obtain the following bound from~\eqref{Heq}
\begin{align*}
\E_{(q,p)} H(q_t, p_t) \leq H(q,p) + \frac{\sigma^2}{2}t  
\end{align*}
for all times $t\geq 0$.  However, to obtain the stronger estimate~\eqref{eqn:basicgeo}, which highlights and is in agreement with the hueristic considerations of Section~\ref{LyapOverview} (see also equation~\eqref{HeqApprox}), we need the perturbation $\Psi$.  Furthermore, using the corrector $\Psi$ will allow us to conclude geometric ergodicity below as stated in the main result Theorem~\ref{MainThm}.       
\end{remark}

With the approproate Doeblin minorization condition (see Lemma~\ref{smallset} below), Theorem~\ref{thm:Lyap} implies geometric ergodicity of the process $(q_t, p_t)$, but in a much weaker weighted norm than used in the statement of Theorem~\ref{MainThm} (see Theorem~1.3 in~\cite{HairerMattingly08}).  The natural strategy employed to improve the norm of convergence is to exponentiate the existing Lyapunov function $V$ with a tuning parameter $c>0$; that is, now consider the test function
\begin{align*}
V_1(q,p) = \exp( c V(q,p)) = \exp( c(H(q,p) + \Psi(q,p))).   
\end{align*}      
Assuming for simplicity of discussion that $\Psi$ is globally $C^2$, we would then find that by construction 
\begin{align}
\nonumber \mathcal{L}V_1(q, p) &= cV_1(q, p)[ \mathcal{L} H(q, p) + \mathcal{L} \Psi(q, p) + c \gamma T (p+ \partial_p \Psi(q,p))^2 ] \\
\label{eqn:expVqv}&= c V_1(q,p)[- \gamma p^2 + \mathcal{L}\Psi(q,p) + c \gamma T p^2 + o(H(q,p)) ]\\
\nonumber &\leq c V_1(q,p)[-\gamma(\Lambda_* - \epsilon)  H(q,p) + c\gamma T p^2 + o(H(q,p)) ]
\end{align}
where $\epsilon >0$ is a small parameter which can be adjusted by tuning the boundary parameters in the definition of $\Psi$.  Upon making the ``brutal" bound $p^2 \leq 2 H(q,p)$ and picking $c\in (0, \Lambda_*/2T)$ and $\epsilon < (0, \Lambda_*-2cT)$, the estimate above becomes
\begin{align*}
\mathcal{L} V_1(q,p) & \leq c V_1(q,p) [ - \gamma( \Lambda_* - \epsilon - 2c T)H(q,p) + o(H(q,p))]
\end{align*}  
which then implies Theorem~\ref{MainThm} with $c\in (0, \Lambda_*/2T)$.  Recalling the definition of $\Lambda_*$ given in equation~\eqref{rateStar}, we note that $\Lambda_* \in (1,2)$ is fixed, so we do not quite realize the upper threshold of $1/T>\Lambda_*/2T$ for the constant $c$ given in the statement of Theorem~\ref{MainThm}.  Nevertheless, we should expect to be able to arrive at the threshold of $1/T$ since $\exp(c H(q,p))$ is integrable with respect to the unique invariant measure (see equation~\eqref{eqn:gibbs}) if and only if $c<1/T$.  

To see why this approach is not optimal in this way as well as how to fix it, recall that the lower-order perturbation $\Psi$ was constructed to exchange $-\gamma p^2$ with its average over one cycle of the approximate dynamics, thus leading to a globally dissipative Lyapunov functional of the form $V=H+ \Psi$.  However, when $V$ is exponentiated as above an additional quadratic variation term, namely $c\gamma T p^2$, arises (see equation~\eqref{eqn:expVqv}).  Thus, instead of correcting for $-\gamma p^2$ as we did for $H$ by itself, we should be correcting for 
\begin{align*}
-\gamma p^2 + c\gamma Tp^2=  - \gamma (1-cT) p^2 
\end{align*}           
in equation~\eqref{eqn:expVqv}.  Note that such a correction is possible when $c<1/T$ as the term above is negative, thus dissipative.  In fact, by definition of $\Psi$ we should replace $V_1$ above by 
\begin{align*}
V_\delta (q,p) = \exp( c(H(q,p) + \delta \Psi))
\end{align*} 
where $\delta = 1- cT$.  Following the same line of reasoning as above and again assuming $\Psi \in C^2$ for simplicity, we can then arrive at the desired bound whenever $c<1/T$.  

The next result summarizes this observation without of course making the false assumption that $\Psi \in C^2$.   

\begin{theorem}
\label{thm:LyapPr}
Fix $c\in (0, 1/T)$ and define $\delta = 1-cT$.  Then there exists $\xi_*>0$ and $h_0(\xi_*)>0$ large enough such that the It\^{o} differential of $V_\delta= \exp(c(H+ \delta \Psi))$ exists and satisfies 
\begin{align*}
d V_\delta(q_t, p_t) \leq [- C_1 V_\delta(q_t, p_t) + C_2] \,dt +  \Sigma_\delta(q_t, p_t) \, dW_t
\end{align*}
for some positive constants $C_1, C_2$ and some locally bounded, measurable mapping $\Sigma_\delta : \H \rightarrow \R$.  
\end{theorem}

 Lastly, we state and prove the following lemma which together with Theorem~\ref{thm:LyapPr} implies Theorem~\ref{MainThm}.  Its proof follows a now standard path \cite{MattinglyStuartHigham02}. 

\begin{lemma}\label{smallset}
  For every $\eta >0$, there exists a probability measure $\nu$
  supported in $\H$, a $t >0$  and $c_0 >0$ so that for all $A\subset \H$ Borel
  \begin{align*}
\inf_{\{(q,p)\in \H \,:\, H(q,p) \leq \eta\}}    \mathcal{P}_t((q,p), A) \geq c_0 \nu(A).  \end{align*}
\end{lemma}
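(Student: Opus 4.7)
The plan is to combine Hörmander's theorem (for smoothness of the transition density) with a Stroock–Varadhan style control argument (for positivity of the density), and then exploit the fact that the sublevel set $\{H \leq \eta\} \cap \H$ is compact in $\H$ to patch these pointwise statements into a uniform bound.

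First I would check Hörmander's parabolic hypoellipticity condition for the generator $\mathcal{L}$. Writing the SDE \eqref{sde1} in Stratonovich form, the drift vector field is $X_0 = p\,\partial_q - (U'(q) + \gamma p)\partial_p$ and the single diffusion vector field is $X_1 = \sigma\,\partial_p$. Their Lie bracket satisfies $[X_0, X_1] = -\sigma\,\partial_q$, so $\{X_1,[X_0,X_1]\}$ spans $T_{(q,p)}\H$ at every point. Hence Hörmander's theorem gives that for every $t>0$ the transition kernel has a $C^\infty$ density $p_t((q,p),(q',p'))$ on $\H \times \H$, jointly smooth in all variables.

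Next I would use the Stroock–Varadhan support theorem to exhibit a point where this density is positive, reachable from every starting point in the compact sublevel set. The associated deterministic control system reads $\dot q = p$, $\dot p = -U'(q) - \gamma p + \sigma u(t)$, and any sufficiently smooth curve $q:[0,t]\to(0,\infty)$ can be tracked by setting $p(s)=\dot q(s)$ and $u(s) = \sigma^{-1}(\ddot q(s) + U'(q(s)) + \gamma \dot q(s))$. Fix a reference point $(q_\star, p_\star) \in \H$ and a time $t>0$. For any initial condition $(q_0,p_0)$ with $H(q_0,p_0)\leq \eta$, the endpoint $(q_0,p_0)$ lies in the compact set $K_\eta \eqdef \{H\leq \eta\}\cap \H$, which is bounded away from the singular line $\{q=0\}$ and bounded in both coordinates because $U(q)\to\infty$ both as $q\to 0^+$ and as $q\to\infty$. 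For each such $(q_0,p_0)$ I can easily construct a smooth curve from $(q_0,p_0)$ to $(q_\star,p_\star)$ in time $t$ that stays in $\H$ (e.g.\ an interpolating cubic, translated if necessary so as to remain positive). By the support theorem this means $(q_\star,p_\star)$ lies in the support of $\mathcal{P}_t((q_0,p_0),\cdot)$, so the smooth density satisfies $p_t((q_0,p_0),(q_\star,p_\star)) > 0$.

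Finally I would upgrade this pointwise positivity to a uniform minorization by compactness. Joint continuity of $p_t$ and positivity at every pair $((q_0,p_0),(q_\star,p_\star)) \in K_\eta \times \{(q_\star,p_\star)\}$ yield, for each $(q_0,p_0)\in K_\eta$, an open neighborhood of $(q_\star,p_\star)$ on which the density is bounded below by a positive constant uniformly over a neighborhood of $(q_0,p_0)$. By compactness of $K_\eta$, a finite subcover exists and I can choose a single open ball $B \subset \H$ around $(q_\star,p_\star)$ and a constant $c_1 > 0$ such that $p_t((q_0,p_0),(q',p')) \geq c_1$ for all $(q_0,p_0) \in K_\eta$ and $(q',p') \in B$. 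Setting $\nu$ to be the normalized Lebesgue measure on $B$ and $c_0 = c_1 \cdot |B|$ gives the desired minorization. The main obstacle is the controllability step, since the singularity at $q=0$ precludes straight-line paths in phase space; but because the control has full authority over $\ddot q$, any smooth path staying in $\{q>0\}$ can be realized, and such paths are easy to construct from any point in the compact set $K_\eta$ to a fixed interior target.
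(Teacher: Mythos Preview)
Your proposal is correct and follows essentially the same approach as the paper: hypoellipticity (H\"ormander's condition) gives a smooth density, a control argument establishes accessibility of a fixed target from every point in the compact sublevel set $\{H\leq\eta\}$, and compactness upgrades pointwise positivity to a uniform minorization. The paper organizes the argument slightly differently---it first obtains a local density lower bound near a fixed reference point and then uses a control/support argument plus Chapman--Kolmogorov to funnel mass from all of $\{H\leq\eta\}$ into that neighborhood---but the ingredients and logic are the same as yours.
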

\begin{proof}
  Let $\mathcal{L}$ denote the generator of the Markov semigroup
  associated to \eqref{sde1} and $\mathcal{L}^*$ denote the formal adjoint of $\mathcal{L}$ with respect to the $L^2$ inner product.  We begin by observing that the operators $\partial_s \pm \mathcal{L}$, $\partial_s \pm \mathcal{L}^*$, $\mathcal{L}$, $\mathcal{L}^*$ are hypoelliptic (see \cite{MattinglyStuartHigham02} for the
  straightforward calculation of Lie-brackets).  For every $x_0=(q_0, p_0) \in \H$, this implies that the transition measure
  $\mathcal{P}_s(x_0,\, \cdot \,)$ possesses a probability density function $\rho_s(x_0, y)$ (with respect to Lebesgue measure $dy$ on $\H$) which is a $C^\infty$ function on $(0, \infty)\times \H \times \H$.  In particular, we may write 
  \begin{align*}
    \mathcal{P}_s(x_0,B_\delta(x_0)) = \int_{B_\delta(x_0)} \rho_s(x_0,y)dy
  \end{align*}
  for $x_0\in \H$, $s>0$, and a sufficiently small $\delta$-ball around $x_0$.  Since
  for small enough $s$, 
  $\mathcal{P}_s(x_0,B_\delta(x_0))>0$ there exists $y_0 \in
  B_\delta(x_0)$, a $c'_0 >0$ and a possibly smaller $\delta$ such that
\begin{align*}
  \inf_{ (x,y) \in B_\delta(x_0)  \times B_\delta(y_0) } \rho_s(x,y)
  \geq c_0' >0\,
\end{align*}
as the function $(x,y)\mapsto\rho_s(x,y)$ is continuous.  

Now one can follow Lemma~3.4 of \cite{MattinglyStuartHigham02} to
construct a control argument ensuring that given any open set
$\mathcal{O} \subset \H$ and $\H(\eta)=\{ (q,p) : H(q,p) \leq \eta\}$,
there exists a $t >0$ and $c_0''>0$ such that
\begin{align*}
  \inf_{z \in \H(\eta) } \mathcal{P}_t(z,\mathcal{O}) \geq c_0''  .  
\end{align*}
The argument in \cite{MattinglyStuartHigham02} assumes that the drift
vector field is bounded on compact sets. This is still true if we
restrict to $\H(\eta)$ for any finite $\eta >0$. The uniform lower
bound is not explicitly mentioned, however one can pick a single
tubular neighborhood size of the needed control and ensure that the
control and its derivatives are uniformly bounded for all starting and
ending points in $\H(\eta)$.

Setting $t=r+s$, defining $\nu$ as normalized Lebesgue measure on
$B_\delta(y_0)$ and combining the preceding two estimates produces,
for any $z \in \H(\eta)$ and $A \subset \H$
\begin{align*}
  \mathcal{P}_t(z, A) =& \int_{\H} \mathcal{P}_r(z,dy)\mathcal{P}_s(y,A) \\
  \geq& \int_{B_\delta(x_0)} \mathcal{P}_r(z,dy)\mathcal{P}_s(y,A\cap B_\delta(y_0)) \\
\geq& \int_{A\cap B_\delta(y_0)} \int_{B_\delta(x_0)} \mathcal{P}_r(z,dy)\mathcal{P}_s(y
, dz) \\
\geq& c_0''\lambda_{leb}(A\cap B_\delta(y_0)) \int_{B_{\delta(x_0)}}
\mathcal{P}_r(z,dy)\\
\geq & c_0' c_0'' \lambda_{leb}(A\cap B_\delta(y_0)) =c_0'
  c_0'' \lambda_{leb}(B_\delta(y_0)) \nu(A) 
\end{align*}
which concludes the proof.
\end{proof}

\begin{proof}[Proof of Theorem~\ref{MainThm}]
Theorem~\ref{MainThm} follows by combining Theorem~\ref{thm:LyapPr} and
Lemma~\ref{smallset} and invoking Theorem~1.2 from
\cite{HairerMattingly08}. This result is a repackaging of a well known
result of Harris. It can be found in many places. Most appropriate for
the current discussion is the work of Meyn and Tweedie exemplified by
\cite[Section 15]{MeynTweedie93}.
\end{proof}

\section{Proof of Theorem~\ref{thm:Lyap} and Theorem~\ref{thm:LyapPr}}
\label{sec:lyap_proof}

To help setup the statement of the lemma, which will be used to prove both results, define the boundary functions on $\{q\in \R\,: \, q>0\}$ by $$c_0(q)=0, \, c_1(q)= -\xi_* q^{-\frac\beta2}, \, c_2(q)= -\xi_* q^{\frac{\alpha}2},\, c_3(q)=\xi_* q^{\frac{\alpha}2}, \, c_4(q)= \xi_* q^{-\frac\beta2},$$ and let $l^i_t$ denote the local time of the process $(q_t, p_t)$ on the curve $p=c_i(q)$, $q>0$, on the time interval $[0,t]$ given by 
\begin{align*}
l^i_t \eqdef \lim_{\epsilon \downarrow 0} \frac{1}{2\epsilon} \int_0^t \mathbf{1}\{-\epsilon < p_s- c_i(q_s)< \epsilon \} d \langle\langle p- c_i(q), p-c_i(q)\rangle \rangle_s 
\end{align*} 
where the limit above is in probability.  We recall that the corrector $\Psi: \H \rightarrow \R$ was defined to be $C^2$ except possibly on the collection of nonintersecting curves $$C_i\eqdef \{ (q,p) \in \H \, : \, p=c_i(q), \, H(q,p) \geq h_0 \}.$$  Therefore for any function $\Phi:  \H\rightarrow \R$ which is continuous except possibly on $\bigcup_{i=0}^4 C_i$, whenever the following quantities exist  we let
\begin{align*}
\Phi(q,p^\pm) &=  \lim_{(Q,P) \rightarrow (q,p)}  \Phi(Q,P) \,\,  \text{ if } \Phi \text{ is continuous at } (q,p) \\
\Phi(q,p^+) &= \lim_{\substack{(Q,P) \rightarrow (q,p)\\P>c_i(Q)}}\Phi(Q,P)  \,\, \text{ if } (q,p) \in C_i\\
\Phi(q,p^-) &= \lim_{\substack{(Q,P) \rightarrow (q,p)\\ P<c_i(Q)}}\Phi(Q,P) \,\, \text{ if } (q,p) \in C_i .  
\end{align*}  

\begin{lemma}
\label{lem:Lyappsi}
Let $h(t) \eqdef H(q_t, p_t)$.  Then the It\^{o} differential of $\Psi(q_t, p_t)$ exists and satisfies
\begin{align}
\label{eqn:peskir}
d\Psi(q_t, p_t) &= \frac{1}{2} (\mathcal{L}\Psi)(q_t, p_t^+) \, dt + \frac{1}{2} (\mathcal{L} \Psi)(q_t, p_t^-) \, dt \\
\nonumber &\,\, + \frac{\sigma}{2} \partial_p \Psi(q_t, p_t^+) \, dW_t + \frac{\sigma}{2} \partial_p \Psi(q_t, p_t^-) dW_t \\
\nonumber &\,\,  + \frac{1}{2}\sum_{i=0}^4 (\partial_p\Psi(q_t, p_t^+) - \partial_p\Psi(q_t, p_t^-) ) \mathbf{1}\{ p_t = c_i(q_t), h(t) \geq h_0\} \, dl^i_t .  
\end{align}
Moreover for each $\epsilon >0$, we can choose the parameters $c_i^\pm>0$ such that for all $\xi_*>0$ large enough there exists $h_0=h_0(\xi_*)>0$ large enough so that 
\begin{enumerate}
\item[(a)]  The local time contribution is nonpositive, i.e., $$ \frac{1}{2}\sum_{i=0}^4 (\partial_p\Psi(q, p^+) - \partial_p\Psi(q, p^-) ) \mathbf{1}\{ p= c_i(q), H(q,p) \geq h_0\} \leq 0.$$
\item[(b)] $\Psi(q,p) = o(H(q,p))$ and 
\begin{align*}
\frac{1}{2}(\mathcal{L}\Psi)(q, p^+) + \frac{1}{2} (\mathcal{L} \Psi)(q, p^-) \leq \gamma  p^2  - \gamma (\Lambda_* - \epsilon) H(q,p)  + o(H(q,p)).  
\end{align*} 
as $H(q,p)\rightarrow \infty$. 
\item[(c)]  There exist constants $C, D>0$ such that 
\begin{align*}
|\partial_p \Psi(q, p^+)| + | \partial_p \Psi(q,p^+)| \leq C H(q,p)^{\alpha^{-1}} + D
\end{align*}  
for all $(q,p) \in \H$.  
\end{enumerate}
\end{lemma}

Taking $V=H+ \Psi$ and $V_\delta= \exp(c(V+ \delta \Psi))$ where $c\in (0,1/T)$ is fixed and $\delta =1 -cT$, it is not hard to show that Lemma~\ref{lem:Lyappsi} along with Peskir's formula~\cite{Peskir_07} implies Theorem~\ref{thm:Lyap} and Theorem~\ref{thm:LyapPr}.

To prove Lemma~\ref{lem:Lyappsi}, we need the following definition.

\begin{definition}
\label{def:asym}
Let $X$ be a subset of $\H$ which possibly depends on $\xi_*$ having the property that for every $\xi_*>0$ there exists a sequence of points $\{(q_n, p_n)\} \subset X$ satisfying $H(q_n, p_n)\rightarrow \infty$ as $n\rightarrow \infty$.  For two functions $f, g: X\rightarrow \R\setminus\{0\}$, perhaps depending on $\xi_*$, we write $f\sim_X g$ if for every $\epsilon >0$ there exists $\xi_*>0$ and $h=h(\xi_*)>0$ large enough such that for all $(q,p) \in X$ with $H(q,p) \geq h$ we have 
\begin{align*}
1-\epsilon  \leq \frac{f(q,p)}{g(q,p)} \leq 1+\epsilon .  
\end{align*}
Also, for functions $f,g:X \rightarrow (0, \infty)$, possibly depending on $\xi_*$, we write $f \precsim_X g$ if for every $\epsilon >0$ there exists $\xi_*>0$ and $h=h(\xi_*)>0$ large enough such that for all $(q,p) \in X$ with $H(q,p) \geq h$ we have 
\begin{align*}
\frac{f(q,p)}{g(p,q)} \leq 1+\epsilon.      
\end{align*}
\end{definition}

\begin{remark}
This notation will be used heavily in the rest of the paper.  It is convenient in that it simplifies the asymptotic expressions that follow, as it allows us to see what happens first when $\xi_*>0$ is chosen large and then, subsequently, when the energy parameter $h$ is taken to infinity in various regions of $\H$.   
\end{remark}

\begin{proof}[Proof of Lemma~\ref{lem:Lyappsi}]
The fact that $\Psi$ has an It\^{o} differential and that it satisfies the formula~\eqref{eqn:peskir} follows from Peskir's formula~\cite{Peskir_07}, the boxed formulas in the Appendix and the fact that $j^{-1}, k^{-1}$ introduced above~\eqref{eqn:jk1} are $C^2$.  The boxed formulas in the Appendix show that $f_i, g_i\in C^2(\mathcal{S}_i(\xi_*, h_0):[0, \infty))$ and that $F_i, G_i \in C^2([h_0, \infty): [0, \infty)).$  The remaining regularity requirements needed to apply Peskir's formula follow immediately by the boundary conditions satisfied by the $\Psi_i^\pm$'s and since
\begin{align*}
\frac{1}{2}\overline{\Psi}_1^+(h) + \overline{\Psi}_2^+(h) + \frac{1}{2}\overline{\Psi}^+_3(h) = \frac{1}{2}\overline{\Psi}_1^-(h) + \overline{\Psi}_2^-(h) + \frac{1}{2}\overline{\Psi}^-_3(h)   =0  
\end{align*}   
for all $h\geq h_0$.   
 
We now turn to establishing conclusions (a),(b), and (c) of the result.  Let $\epsilon >0$ be small.  We first establish conclusion (a) concerning the sign of the local time contribution in formula~\eqref{eqn:peskir}.  In total, there are six calculations that need to be performed: two on the positive $p$ side of $\H$ on the boundaries $$\mathcal{S}_{12}^+\eqdef\mathcal{S}_1^+ \cap \mathcal{S}_{2}^+\, \text{ and } \, \mathcal{S}_{23}^+\eqdef \mathcal{S}_{2}^+ \cap \mathcal{S}_3^+ ,$$ two on the negative $p$ side of $\H$ on the boundaries $$\mathcal{S}_{12}^- \eqdef \mathcal{S}_1^- \cap \mathcal{S}_2^- \, \text{ and } \, \mathcal{S}_{23}^-\eqdef\mathcal{S}_{2}^- \cap \mathcal{S}_3^- ,$$ and two where $p=0$ on the boundaries $$\mathcal{S}_{10}\eqdef \mathcal{S}_1 \cap \{(q,p) \in \H\,: p=0 \} \,\, \text{ and } \, \, \mathcal{S}_{30}\eqdef \mathcal{S}_3 \cap \{ (q,p) \, : \, p=0\}.$$  We start with the boundary calculations on the positive $p$ side of $\H$, beginning with $\mathcal{S}_{12}^+$.

Observe that for $(q,p) \in \mathcal{S}_{12}^+ $ and $h=H(q,p)$
\begin{align*}
&\partial_p \Psi(q, p^+) -\partial_p \Psi(q, p^-)\\
&= \partial_p(\Psi_2^+(q,p) + \tfrac{1}{2}\overline{\Psi}_3^+(h)) - \partial_p(\Psi_1^+(q,p)+ \overline{\Psi}_2^+(h)+ \tfrac{1}{2} \overline{\Psi}_3^+(h))\\
&= \partial_p (\Psi_2^+(q,p) - \overline{\Psi}_2^+(h)) - \partial_p(\Psi_1^+(q,p))\\
&= c_2^+ \gamma[ \mathcal{A}_+(h) \partial_p(g_2-\tfrac{1}{2} G_2) -\partial_p( f_2- \tfrac{1}{2}F_2)] - c_1^+ \gamma [\mathcal{A}_+(h) \partial_pg_1- \partial_p f_1].
\end{align*}
Applying formulas~\eqref{eqn:FS1b}, \eqref{eqn:FS2b} and \eqref{eqn:AF} in the Appendix, we find that provided $c_1^+ \neq c_1^+$
\begin{align*}
\partial_p \Psi(q, p^+) -\partial_p \Psi(q, p^-)\sim_{\mathcal{S}_{12}^+} (c_2^+-c_1^+) \gamma \times \text{positive quantity}
\end{align*}
as $2\alpha/(\alpha+2) <2$ for $\alpha >2$.  By picking $c_2^+ < c_1^+$, we find that for all $\xi_*>0$ and all $h_0(\xi_*)>0$ large enough, the local time contribution from this boundary is nonpositive.

We now move on to the next and last boundary $\mathcal{S}_{23}^+$ on the positive $p$ side of $\H$.  Note that for $(q,p) \in \mathcal{S}_{23}^+$ and $h=H(q,p)$
\begin{align*}
&\partial_p \Psi(q, p^+) -\partial_p \Psi(q, p^-)
\\
&= \partial_p(\Psi_2^+(q, p) +\tfrac{1}{2}\overline{\Psi}_3^+(h))  - \partial_p(\Psi_3^+(q,p)- \tfrac{1}{2}\overline{\Psi}_3^+(h))\\
&=\partial_p(\Psi_2^+(q,p)) -\partial_p(\Psi_3^+(q,p)-\overline{\Psi}_3^+(h)) \\
&= c_2^+\gamma[\mathcal{A}_+(h) \partial_p g_2 - \partial_{p} f_2 ] - c_3^+\gamma [ \mathcal{A}_+(h) \partial_p(g_3-G_3)-\partial_p(f_3-F_3)]\\
&\qquad + c_3^+ \gamma\partial_p(\mathcal{A}_{+}(h)) G_3(h)( (j'(h))^{-1}-1).    
\end{align*}
Applying formulas~\eqref{eqn:FS2b}, \eqref{eqn:FS3b} and ~\eqref{eqn:AF} in the Appendix as well as~\eqref{eqn:jk2}, we find that so long as $c_2^+ \neq c_3^+$
\begin{align*}
\partial_p \Psi(q, p^+) -\partial_p \Psi(q, p^-)\sim_{\mathcal{S}_{23}^+} (c_2^+ - c_3^+)\gamma \times \text{negative quantity} 
\end{align*}
as $2\alpha/(\alpha+2) < 2$.  By picking $c_3^+< c_2^+$, we find that for all $\xi_*>0$ large enough and all $h_0(\xi_*)>0$ large enough, the local time contribution from this boundary is nonpositive.

We now perform the boundary-flux calculations on the negative $p$ side of $\H$, starting with $\mathcal{S}_{12}^-$.  Note that for $(q,p) \in \mathcal{S}_{12}^-$ and $h=H(q,p)$:
\begin{align*}
&\partial_p \Psi(q, p^+) -\partial_p \Psi(q, p^-)\\
&= \partial_p(\Psi_1^-(q, p) -\tfrac{1}{2} \overline{\Psi}_1^-(h))   - \partial_p(\Psi_2^-(q,p)+ \tfrac{1}{2}\overline{\Psi}_1^-(h))\\
&=\partial_p(\Psi_1^-(q, p) - \overline{\Psi}_1^-(h))   - \partial_p(\Psi_2^-(q,p))\\
&= c_1^- \gamma[\mathcal{A}_-(h) \partial_p (g_1-G_1) -  \partial_p(f_1-F_1)] - c_2^- \gamma [\mathcal{A}_-(h) \partial_p g_2- \partial_p f_2]\\
&\qquad + c_1^- \gamma \partial_p( \mathcal{A}_{-}(h)) G_1(h) ((k'(h))^{-1}-1).    
\end{align*}
Applying formulas~\eqref{eqn:FS3b}, \eqref{eqn:FS2b} and~\eqref{eqn:AF} in the Appendix as well as~\eqref{eqn:jk2}, we find that so long as $c_1^- \neq c_2^-$
\begin{align*}
\partial_p \Psi(q, p^+) -\partial_p \Psi(q, p^-)\sim_{\mathcal{S}_{12}^-} (c_1^- - c_2^-) \gamma \times \text{positive quantity}
\end{align*}
again since $2\alpha/(\alpha+2) < 2$.  By picking $c_1^-< c_2^-$, we find that for all $\xi_*>0$ and $h_0(\xi_*)>0$ large enough, the local time contribution from this boundary is nonpositive.

We now move onto the last boundary $\mathcal{S}_{23}^-$ on the negative $p$ side of $\H$.   
Note that for $(q,p) \in \mathcal{S}_{23}^-$ and $h=H(q,p)$:
\begin{align*}
&\partial_p \Psi(q, p^+) -\partial_p \Psi(q, p^-)\\
&= \partial_p(\Psi_3^-(q, p) +\tfrac{1}{2} \overline{\Psi}_1^-(h) + \overline{\Psi}^-_2(h))   - \partial_p(\Psi_2^-(q,p)+ \tfrac{1}{2}\overline{\Psi}_1^-(h))\\
&=\partial_p(\Psi_3^-(q, p))   - \partial_p(\Psi_2^-(q,p)- \overline{\Psi}^-_2(h))\\
&= c_3^- \gamma[\mathcal{A}_-(h) \partial_p g_3 -  \partial_pf_3] - c_2^- \gamma [\mathcal{A}_-(h) \partial_p( g_2- \tfrac{1}{2}G_2)- \partial_p (f_2-\tfrac{1}{2} F_2)]
\end{align*}
Applying formulas~\eqref{eqn:FS3b}, \eqref{eqn:FS2b} and~\eqref{eqn:AF} in the Appendix, we find that so long as $c_1^- \neq c_2^-$
\begin{align*}
\partial_p \Psi(q, p^+) -\partial_p \Psi(q, p^-)\sim_{\mathcal{S}_{21}^-} (c_3^- - c_2^-) \gamma \times \text{negative quantity}
\end{align*}
again since $2\alpha/(\alpha+2) < 2$.  By picking $c_2^-< c_3^-$, we find that for all $\xi_*>0$ and $h_0(\xi_*)>0$ large enough, the local time contribution from this boundary is nonpositive.  

Thus far, the parameters $c_i^\pm>0$ have been picked to satisfy
\begin{align*}
c_1^+ > c_2^+ >c_3^+\,\,, \text{ and } c_1^-< c_2^- < c_3^-. 
\end{align*}
On the final two boundaries $\mathcal{S}_{10}$ and $\mathcal{S}_{30}$ we must make sure these choices can be respected.  We begin with the boundary $\mathcal{S}_{10}$.  Since 
\begin{align*}
\tfrac{1}{2}\overline{\Psi}_1^+(h) + \overline{\Psi}_2^+(h) + \tfrac{1}{2}\overline{\Psi}_3^+(h)=0
\end{align*}
and $\partial_pf_1=0$ on $\mathcal{S}_{10}$ (see formulas~\eqref{eqn:FS1b}), we find that for $(q,p)\in \mathcal{S}_{10}$ and $h=H(q,p)$: 
\begin{align*}
\partial_p \Psi(q,p^+)- \partial_p \Psi(q,p^-) &= \partial_p (\Psi_1^+(q,p) + \overline{\Psi}_2^+(h) + \tfrac{1}{2}\overline{\Psi}_3^+(h)) - \partial_p( \Psi_1^-(q,p) - \overline{\Psi}_1^-(h))\\
&= \partial_p ( \Psi_1^+(q,p) - \tfrac{1}{2}\overline{\Psi}_1^+(h)) - \partial_p( \Psi_1^-(q,p) - \overline{\Psi}_1^-(h))\\
&= \partial_p (\Psi_1^+(q,p) ) -  \partial_p (\Psi_1^-(q,p))\\
&= c_1^+ \gamma \mathcal{A}_{+}(k^{-1}(K(q,p))) \partial_p g_1 - c_1^- \gamma \mathcal{A}_- (k^{-1}(K(q,p))) \partial_p g_1.
\end{align*}  
Thus since $\partial_p g_1 <0$ on $\mathcal{S}_{10}$ (see formulas~\eqref{eqn:FS1b}), so long as $c_1^+ \neq c_1^-$ we have that 
\begin{align*}
&\partial_p \Psi(q,p^+)- \partial_p \Psi(q,p^-)\sim_{\mathcal{S}_{10}} (c_1^+ - c_1^-) \gamma \times \text{negative quantity}.  
\end{align*}
Hence by picking $c_1^+ >c_1^-$, we find that for all $\xi_*>0$ and $h_0(\xi_*)>0$ large enough, the local time contribution from this boundary is nonpositive.  

We now move on to the last boundary $\mathcal{S}_{30}$.  Since 
\begin{align*}
\tfrac{1}{2}\overline{\Psi}_1^-(h) + \overline{\Psi}_2^-(h) + \tfrac{1}{2}\overline{\Psi}_3^-(h)=0
\end{align*}
and $\partial_pf_3 =0$ on $\mathcal{S}_{30}$ (see formulas~\eqref{eqn:FS3b}), 
we find that for $(q,p) \in \mathcal{S}_{30}$ and $h=H(q,p)$:
\begin{align*}
\partial_p \Psi(q,p^+)- \partial_p \Psi(q,p^-) &= \partial_p (\Psi_3^+(q,p) -\tfrac{1}{2} \overline{\Psi}_3^+(h)) - \partial_p( \Psi_3^-(q,p) - \tfrac{1}{2}\overline{\Psi}_3^-(h))\\
&= \partial_p (\Psi_3^+(q,p) ) -  \partial_p (\Psi_3^-(q,p))\\
&= c_3^+ \gamma \mathcal{A}_{+}(j^{-1}(J)) \partial_p g_3 - c_3^- \gamma \mathcal{A}_- (j^{-1}(J)) \partial_p g_3.
\end{align*}  
Thus since $\partial_p g_3 >0$ on $\mathcal{S}_{30}$ (see formulas~\eqref{eqn:FS3b}), so long as $c_3^+ \neq c_3^-$ we have that 
\begin{align*}
&\partial_p \Psi(q,p^+)- \partial_p \Psi(q,p^-)\sim_{\mathcal{S}_{30}} (c_3^+ - c_3^-) \gamma \times \text{positive quantity}.  
\end{align*}
Hence by picking $c_3^+ < c_3^-$, we find that for all $\xi_*>0$ and $h_0(\xi_*)>0$ large enough, the local time contribution from this boundary is nonpositive.

To summarize the proof so far in part (a), by picking
\begin{align}
\label{eqn:bfparam}
c_1^+ > c_2^+ >c_3^+, \,\,\,\, c_1^- < c_2^- < c_3^-, \,\,\,\, c_1^+>c_1^-, \,\, \,\,c_3^- > c_3^+
\end{align}
 for all $\xi_*>0$ and $h_0(\xi_*)>0$ large enough
 \begin{align*}
  \frac{1}{2}\sum_{i=1}^4 (\partial_p\Psi(q, p^+) - \partial_p\Psi(q, p^-) ) \mathbf{1}\{ p= c_i(q), H(q,p) \geq h_0\} \leq 0.  
   \end{align*} 
   But let us for a moment see how we can pick the parameters $c_i^\pm$ in this way with $c_i^\pm \leq 1$ and with $c_i^\pm$ close to $1$.  This will be important in part (b).  Recall $\epsilon >0$ small was fixed and define $c_1^+=1$, $c_2^+=1-\tfrac{1}{2}\epsilon$, $c_3^+=1-\tfrac{7}{8}\epsilon$, $c_1^- =1-\tfrac{7}{8}\epsilon$, $c_2^-=1-\tfrac{3}{4}\epsilon$, and $c_3^-=1- \tfrac{1}{2}\epsilon.$  Then notice that the relationships~\eqref{eqn:bfparam} are respected and that $c_i^\pm \leq 1$.         
   
   We now work on establishing part (b) of the result.  The fact that $\Psi(q,p)=o(H(q,p))$ as $H(q,p)\rightarrow \infty$ follows easily by combining the formulas~\eqref{eqn:FS1a}, \eqref{eqn:FS2a}, \eqref{eqn:FS3a} and \eqref{eqn:AF} with $i=0$ in the Appendix with the formulas \eqref{eqn:jk1} to produce an asymptotic bound for $\Psi$ in each region.  More precisely, we find that for $h=H(q,p)$ 
   \begin{align*}
   |\Psi(q,p)| \precsim_{\H}  C(\xi_*) h^{\frac{1}{2}+\alpha^{-1}} 
   \end{align*}  
   for some positive constant $C(\xi_*)$.  Since $\alpha >2$, this finishes the proof that $\Psi(q,p)= o(H(q,p)) $ as $H(q,p) \rightarrow \infty$.  Now we check the claimed bound on the generator applied to $\Psi$.  This will be done region by region.


We begin in the region $\mathcal{S}_1^+$  and obtain the necessary estimate for 
\begin{align*}
\mathcal{L}(\Psi_1^+(q,p) +  \overline{\Psi}_2^+(h) + \tfrac{1}{2}\overline{\Psi}_3^+(h))=\mathcal{L} (\Psi_1^+(q,p) - \tfrac{1}{2} \overline{\Psi}_1^+(h))
\end{align*}
where $(q,p) \in \mathcal{S}_1^+$ and $h=H(q,p)$.  Observe that for $(q,p) \in \mathcal{S}_1^+$ and $h=H(q,p)$ we may write:  
\begin{align*}
&\mathcal{L} ( \Psi_1^+(q,p) - \tfrac{1}{2} \overline{\Psi}_1^+(h))\\
&= \mathcal{K}\Psi_1^+(q,p)+ (\mathcal{L}-\mathcal{K})(\Psi_1^+(q,p))-\tfrac{1}{2}(\mathcal{L}- \mathcal{H})(\overline{\Psi}_1^+(h)) \\
&=  -\gamma[\mathcal{A}_{+}(k^{-1}(K(q,p)))- p^2]  +  (\mathcal{L}-\mathcal{K})(\Psi_1^+(q,p))-\tfrac{1}{2}(\mathcal{L}- \mathcal{H})(\overline{\Psi}_1^+(h))
\end{align*} 
since $\mathcal{H} (\overline{\Psi}_1^+(h))=0$.  Note that in the last equality we used the fact that $c_1^+ = 1$.  Note also that for $h=H(q,p)$: 
\begin{align*}
|-U'(q)-b \beta q^{-\beta-1}-\gamma p| \precsim_{\mathcal{S}_1^+} C\max\{ o(h^{1+\beta^{-1}}), h^{\frac{1}{2}}\}.  
\end{align*}
Therefore, applying formulas~\eqref{eqn:FS1a}, \eqref{eqn:AF} and \eqref{eqn:jk1}-\eqref{eqn:jk3} produces the required formula
\begin{align*}
\mathcal{L}(\Psi_1^+(q,p) +  \overline{\Psi}_2^+(h) + \tfrac{1}{2}\overline{\Psi}_3^+(h)) \leq -\gamma( \Lambda_*- \epsilon)  h + \gamma p^2 + o(h) .  
\end{align*}


Moving onto region $\mathcal{S}_2^+$, notice that for $(q,p) \in \mathcal{S}_2^+ $ and $h=H(q,p)$ we have: 
\begin{align*}
\mathcal{L}(\Psi^+_2(q,p) + \tfrac{1}{2} \overline{\Psi}_3^+(h)  )&=  \mathcal{H} \Psi_2^+(q,p) + (\mathcal{L}-\mathcal{H}) (\Psi_2^+(q,p) + \tfrac{1}{2} \overline{\Psi}_3^+(h)) \\
&= - (1-\tfrac{1}{2}\epsilon) \gamma [\mathcal{A}_{+}(h)-p^2] + (\mathcal{L}-\mathcal{H}) (\Psi_2^+(q,p)+\tfrac{1}{2} \overline{\Psi}_3^+(h) ) 
\end{align*} 
where in the last equality we used the fact that $c_2^+=1-\tfrac{1}{2}\epsilon.$  Applying the formulas~\eqref{eqn:FS2a}, \eqref{eqn:AF} and \eqref{eqn:jk1}-\eqref{eqn:jk3}  and the fact that $\alpha >2$ we obtain the required bound in $\mathcal{S}_2^+$
\begin{align*}
\mathcal{L}(\Psi^+_2(q,p) + \tfrac{1}{2} \overline{\Psi}_3^+(h)  )\leq - \gamma( \Lambda_* -\epsilon) h + \gamma p^2 + o(h).  
\end{align*}


In the region $\mathcal{S}_3^+$, notice that for $(q,p) \in \mathcal{S}_3^+ $ and $h=H(q,p)$ we have 
\begin{align*}
&\mathcal{L}(\Psi^+_3(q,p) - \tfrac{1}{2} \overline{\Psi}_3^+(h)  )\\
&= \mathcal{J} \Psi_3^+(q,p) + (\mathcal{L}-\mathcal{J}) (\Psi_3^+(q,p) ) -\tfrac{1}{2}(\mathcal{L}-\mathcal{H})(\overline{\Psi}_3^+(h)) \\
&= - (1- \tfrac{7}{8}\epsilon) \gamma [\mathcal{A}_+(j^{-1}(J))- p^2] + (\mathcal{L}-\mathcal{J}) (\Psi_3^+(q,p) ) -\tfrac{1}{2}(\mathcal{L}-\mathcal{H})(\overline{\Psi}_3^+(h))
\end{align*} 
where in the last equality we used the fact that $c_3^+=1-\tfrac{7}{8}\epsilon$.  To help estimate the remainder term $(\mathcal{L}-\mathcal{J}) (\Psi_3^+(q,p)),$ first note that for $h=H(q,p)$:
\begin{align*}
|-U'(q)+ a\alpha q^{\alpha-1}- \gamma p| \precsim_{\mathcal{S}_3^+} C \max\{o(h^{1-\alpha^{-1}}), h^{\frac{1}{2}} \}.  
\end{align*}
Therefore for $h=H(q,p)$ applying the formulas~\eqref{eqn:FS3a}, \eqref{eqn:AF} and \eqref{eqn:jk1}-\eqref{eqn:jk3} produces the necessary bound on $\mathcal{S}_3^+$:
\begin{align*}
\mathcal{L}(\Psi^+_3(q,p) - \tfrac{1}{2} \overline{\Psi}_3^+(h)  )\leq -\gamma(\Lambda_*-\epsilon) h + \gamma p^2 + o(h).  
\end{align*}


The arguments establishing the needed bounds in the regions $\mathcal{S}_i^-$, $i=1,2,3$, are done in a nearly identical fashion so we omit those details for brevity.  This finishes the proof of part (b).

The proof of part (c) is a straightforward consequence of the formulas on the first three pages of the Appendix and the formulas \eqref{eqn:jk1}-\eqref{eqn:jk3}.                 
\end{proof}

\section{Conclusion}
\label{sec:conclusion}

We began by observing that to leading order, the dynamics at high
energy follows the deterministic dynamics given by a modified
Hamiltonian perturbed by a small noise. To leverage this observation,
stochastic averaging techniques, built on auxiliary Poisson equation
methods, were used to construct a Lyapunov function sufficient to
prove exponential convergence to equilibrium.  The central result
given in Theorem~\ref{MainThm} covers important singular potentials,
including Lennard-Jones type potentials, which had not been covered by
previous results.  Theorem~\ref{MainThm} has two principal
remaining deficiencies. First it only applies to two interacting particles in
isolation.  Second, Theorem~\ref{MainThm} does not cover the classical
case where the confining potential $U$ grows quadratically at
infinity.

In principle, the extension to many particles could follow a similar
route, since when two particles are near each other their principal
interaction is with each other while other particles are just a small
perturbation. However it is possible that the orbit over which one
must average could also interact with other particles. This would make
finding closed form representations of the averaging measure difficult
at best (chaotic orbits are to be expected). Even if in some setting
the high energy orbits remain of the type considered here, the
combinatorics of the possible interactions would be complicated.

In contrast, the extension to potentials with quadratic growth is
almost certainly within reach. In fact, Figure ~\ref{fig:numerics}
gives a strong indication how to proceed. Since for $\alpha_1=2$ the
period of oscillation is not going to zero as the energy of the system
increases, instantaneous homogenization/averaging of the effect of one
orbit is not feasible. However, building on an idea from
\cite{Rey-BelletThomas:2002} one could consider the average of the
energy over one period $\tau$ of the system. First observe that $\tau$
has a limit $\tau_*>0$ as the energy goes to $\infty$.  Namely one
would consider the quantity
\begin{align*}
  V_t=\frac{1}{\tau_*}\int_{t}^{t+\tau_*} \E[ H(q_s,p_s)\,|\,\mathcal{F}_t] ds\,.
\end{align*}
Then using \eqref{Heq} one obtains
\begin{align*}
  \frac{\partial V_t}{\partial t}&= \frac{1}{\tau_*}
  \big(\E [ H(q_{t+\tau_*},p_{t+\tau_*})\,|\,\mathcal{F}_t]-H(q_{t},p_{t}) \big)\\
  &= -
  \frac{\gamma}{\tau_*} \int_{t}^{t+\tau_*} \E[ p_s^2 \,|\,\mathcal{F}_t]ds + \frac{\sigma^2}2\,. 
\end{align*}
Since at high energy $p_s$ will be very close to the deterministic
orbit, one can likely prove that
\begin{align*}
 \frac{1}{\tau_*}\E  \int_{t}^{t+\tau_*} p_s^2 ds\approx \frac{1}{\tau_*}
 \int_{t}^{t+\tau_*} \E\big[ \rate(H(q_s.p_s))H(q_s,p_s)\big]ds\approx \rate_*  V_t \,.
\end{align*}
This could then be used to obtain control of the excursions away from
the center of space. Note that following the above argument will not
produce an Lyapunov function which is infinitesimally decreasing on
average as was constructed in this paper. This argument essentially
amortizes the total energy dissipation that occurs over a single
orbit, smoothing out the times when the infinitesimal rate of energy
dissipation nears zero. We felt that covering the quadratic case
is not sufficient motivation for the extra complications.

\vspace{1em}
\noindent\textbf{Acknowledgments.}
The authors wishes to thank the NSF for its support through grants
DMS-0449910 (JCM), DMS-0854879 (JCM), DMS-1613337 (JCM), DMS-0204690 (SCS), and DMS-1612898 (DPH). JCM would like to thank Martin Hairer and Luc Rey-Bellet for interesting and
informative discussions. We would also like to thank a careful referee
for finding a mistake in an earlier version of the paper as well as for making several helpful comments and suggestions.  

\bibliography{refs}

\newcommand{\etalchar}[1]{$^{#1}$}
\begin{thebibliography}{BBO{\etalchar{+}}83}

\bibitem[AKM12]{AKM12}
Avanti Athreya, Tiffany Kolba, and Jonathan~C. Mattingly.
\newblock Propagating {L}yapunov functions to prove noise-induced
  stabilization.
\newblock {\em Electron. J. Probab.}, 17:no. 96, 38, 2012.

\bibitem[AT87]{Allen:1987}
M.~P. Allen and D.~J. Tildesley.
\newblock {\em Computer Simulation of Liquids}.
\newblock Oxford, 1987.

\bibitem[{Bau}13]{2013arXiv1308.4938B}
F.~{Baudoin}.
\newblock {Bakry-Emery meet Villani}.
\newblock {\em ArXiv e-prints}, August 2013.

\bibitem[BBO{\etalchar{+}}83]{Brooks:1983}
B.~R. Brooks, R.~E. Bruccoleri, B.~D. Olafson, D.~J. States, S.~Swaminathan,
  and M.~Karplus.
\newblock {CHARMM}: A program for macromolecular energy, minimization, and
  dynamics calculations.
\newblock {\em J. Comp. Chem.}, 4:187--217, 1983.

\bibitem[CG10]{MR2674062}
Florian Conrad and Martin Grothaus.
\newblock Construction, ergodicity and rate of convergence of {$N$}-particle
  {L}angevin dynamics with singular potentials.
\newblock {\em J. Evol. Equ.}, 10(3):623--662, 2010.

\bibitem[CS08]{Cooke:2008}
Ben Cooke and Scott~C. Schmidler.
\newblock Preserving the {B}oltzmann ensemble in replica-exchange molecular
  dynamics.
\newblock {\em J. Chem. Phys.}, 129(16):164112--17), 2008.

\bibitem[DMS15]{MR3324910}
Jean Dolbeault, Cl\'ement Mouhot, and Christian Schmeiser.
\newblock Hypocoercivity for linear kinetic equations conserving mass.
\newblock {\em Trans. Amer. Math. Soc.}, 367(6):3807--3828, 2015.

\bibitem[DW94]{DupuisWilliams1994}
Paul Dupuis and Ruth~J. Williams.
\newblock Lyapunov functions for semimartingale reflecting {B}rownian motions.
\newblock {\em Ann. Probab.}, 22(2):680--702, 1994.

\bibitem[FS96]{Frenkel:1996}
Daan Frenkel and Berend Smit.
\newblock {\em Understanding Molecular Simulation}.
\newblock Academic Press, 1996.

\bibitem[GS15]{MR3413926}
Martin Grothaus and Patrik Stilgenbauer.
\newblock A hypocoercivity related ergodicity method for singularly distorted
  non-symmetric diffusions.
\newblock {\em Integral Equations Operator Theory}, 83(3):331--379, 2015.

\bibitem[GS16]{MR3522857}
Martin Grothaus and Patrik Stilgenbauer.
\newblock Hilbert space hypocoercivity for the {L}angevin dynamics revisited.
\newblock {\em Methods Funct. Anal. Topology}, 22(2):152--168, 2016.

\bibitem[HKM02]{HuangKontoyiannisMeyn2001}
Jianyi Huang, Ioannis Kontoyiannis, and Sean~P. Meyn.
\newblock The {ODE} method and spectral theory of {M}arkov operators.
\newblock In {\em Stochastic theory and control ({L}awrence, {KS}, 2001)},
  volume 280 of {\em Lecture Notes in Control and Inform. Sci.}, pages
  205--221. Springer, Berlin, 2002.

\bibitem[HM08]{HairerMattingly08}
Martin Hairer and Jonathan~C. Mattingly.
\newblock Yet another look at {H}arris' ergodic theorem for {M}arkov chains.
\newblock arXiv/0810.2777, 2008.

\bibitem[HM09]{HairerMattingly:2009}
Martin Hairer and Jonathan~C. Mattingly.
\newblock Slow energy dissipation in anharmonic oscillator chains.
\newblock {\em Comm. Pure Appl. Math.}, 62(8):999--1032, 2009.

\bibitem[HM15a]{HerMat15i}
David Herzog and Jonathan Mattingly.
\newblock Noise-induced stabilization of planar flows i.
\newblock {\em Electron. J. Probab.}, 20, 2015.

\bibitem[HM15b]{HerMat15ii}
David Herzog and Jonathan Mattingly.
\newblock Noise-induced stabilization of planar flows ii.
\newblock {\em Electron. J. Probab.}, 20, 2015.

\bibitem[IL71]{Ibragimov:1971}
I.~A. Ibragimov and Yu.~V. Linnik.
\newblock {\em Independent and Stationary Sequences of Random Variables}.
\newblock Wolters-Noordhoff, 1971.

\bibitem[JTR88]{Jorgensen:1988}
W.~L. Jorgensen and J.~Tirado-Rives.
\newblock The {OPLS} force field for proteins. {E}nergy minimizations for
  crystals of cyclic peptides and crambin.
\newblock {\em J. Amer. Chem. Soc.}, 110:1657--1666, 1988.

\bibitem[Ken04]{Kendall:2004}
W.~S. Kendall.
\newblock Geometric ergodicity and perfect simulation.
\newblock {\em Elec.Comm. Prob.}, 9:140--151, 2004.

\bibitem[Lea96]{Leach:1996}
Andrew~R. Leach.
\newblock {\em Molecular Modelling: Principles and Applications}.
\newblock Addison Wesley Longman Ltd., 1996.

\bibitem[Mey08]{Meyn2008}
Sean Meyn.
\newblock {\em Control techniques for complex networks}.
\newblock Cambridge University Press, Cambridge, 2008.

\bibitem[MSH02]{MattinglyStuartHigham02}
J.~C. Mattingly, A.~M. Stuart, and D.~J. Higham.
\newblock Ergodicity for {SDE}s and approximations: locally {L}ipschitz vector
  fields and degenerate noise.
\newblock {\em Stochastic Process. Appl.}, 101(2):185--232, 2002.

\bibitem[MT93a]{MeynTweedie93}
S.~P. Meyn and R.~L. Tweedie.
\newblock {\em Markov chains and stochastic stability}.
\newblock Communications and Control Engineering Series. Springer-Verlag London
  Ltd., London, 1993.

\bibitem[MT93b]{MT_93}
Sean~P. Meyn and R.~L. Tweedie.
\newblock Stability of {M}arkovian processes. {III}. {F}oster-{L}yapunov
  criteria for continuous-time processes.
\newblock {\em Adv. in Appl. Probab.}, 25(3):518--548, 1993.

\bibitem[Pas94]{Pastor:1994}
R.~W. Pastor.
\newblock Techniques and applications of {L}angevin dynamics simulations.
\newblock In G.~R. Luckhurst and C.~A. Veracini, editors, {\em The Molecular
  Dynamics of Liquid Crystals}, pages 85--138. Kluwer Academic, 1994.

\bibitem[PCC{\etalchar{+}}95]{Pearlman:1995}
D.~A. Pearlman, D.~A. Case, J.~W. Caldwell, W.~R. Ross, III T.~E.~Cheatham,
  S.~DeBolt, D.~Ferguson, G.~Seibel, and P.~Kollman.
\newblock {AMBER}, a computer program for applying molecular mechanics, normal
  mode analysis, molecular dynamics and free energy calculations to elucidate
  the structures and energies of molecules.
\newblock {\em Comp. Phys. Commun.}, 91:1--41, 1995.

\bibitem[Pes07]{Peskir_07}
Goran Peskir.
\newblock A change-of-variable formula with local time on surfaces.
\newblock In {\em S\'eminaire de {P}robabilit\'es {XL}}, volume 1899 of {\em
  Lecture Notes in Math.}, pages 69--96. Springer, Berlin, 2007.

\bibitem[RBT02]{Rey-BelletThomas:2002}
Luc Rey-Bellet and Lawrence~E. Thomas.
\newblock Exponential convergence to non-equilibrium stationary states in
  classical statistical mechanics.
\newblock {\em Comm. Math. Phys.}, 225(2):305--329, 2002.

\bibitem[Sch02]{Schlick:2002}
Tamar Schlick.
\newblock {\em Molecular Modeling and Simulation}.
\newblock Springer-Verlag, 2002.

\bibitem[Tuc10]{Tuckerman_2010}
Mark Tuckerman.
\newblock {\em Statistical Mechanics: Theory and Molecular Simulation}.
\newblock Oxford University Press, Mar 2010.

\bibitem[Vil09]{Villani_2009}
C\'edric Villani.
\newblock Hypocoercivity.
\newblock {\em Mem. Amer. Math. Soc.}, 202(950):iv+141, 2009.

\end{thebibliography}
 \bibliographystyle{alpha}
 
\section*{Appendix}

In what follows, $C(\xi_*)$ will denote a generic positive constant
depending on $\xi_*$.  Also, below we write $\partial_p^0\eqdef1$, $f_i=f_i(Q,P)$, $g_i=g_i(Q,P)$, and $F_i=F_j(h)$, $G_i=G_i(h)$, $\mathcal{A}_\pm=\mathcal{A}_{\pm}(h)$ where $h=H(Q,P)$.  Recalling $\H_{h_0}= \bigcup_{i} \mathcal{S}_i$ and the notation
$\sim_{X}$ and $\precsim_X$ introduced in Definition~\ref{def:asym},
here we will establish the following formulas ($i=0,1,2$ below). For notational compactness, we also introduce
boundary sets
$\mathcal{S}_{k\ell}^\pm= \mathcal{S}_k^\pm \cap \mathcal{S}_\ell^\pm$, $\mathcal{S}_{10}\eqdef \mathcal{S}_1 \cap \{(Q,P) \in \H \, : \, P=0\}$ and $\mathcal{S}_{30}\eqdef \mathcal{S}_3 \cap \{(Q,P) \in \H \, : \,  P=0\}.$ 
\begin{equation}
\label{eqn:FS1a}
\tag{FS1a}
\begin{array}{lr}
|\partial_P^i f_1| \precsim_{\mathcal{S}_1}C(\xi_*) h^{\frac{1}{2}- \frac{1}{2}i-\betaI},&\,\, |\partial_P^i g_1|\precsim_{\mathcal{S}_1}C(\xi_*) h^{-\frac{1}{2}-\frac{1}{2}i-\betaI}\\\\
|\partial_P^i F_1| \precsim_{\H_{h_0}}C(\xi_*) h^{\frac{1}{2}- \frac{1}{2}i-\betaI}& \,\,  
|\partial_P^i G_1| \precsim_{\H_{h_0}} C(\xi_*) h^{-\frac{1}{2}-\frac{1}{2}i-\betaI}
\end{array}
\end{equation}

\begin{align}
\nonumber &\partial_P(f_1- F_1) \sim_{\mathcal{S}_{12}^-}, \, \partial_P f_1 \sim_{\mathcal{S}_{12}^+}  - 2^{1-\betaI}\betaI \xi_*^{\frac2\beta} h^{-\betaI}\\ \nonumber \\ 
\nonumber &\partial_P(g_1-G_1) \sim_{\mathcal{S}_{12}^-}, \, \partial_P g_1 \sim_{\mathcal{S}_{12}^+}- 2^{-\betaI} \betaI \xi_*^{\frac2\beta} h^{-1-\betaI}\\ \label{eqn:FS1b}
\tag{FS1b}\\ \nonumber 
&\partial_P g_1 <0 \text{ on } \mathcal{S}_{10}, \,\,  \partial_P f_1=0 \text{ on } \mathcal{S}_{10} 
\\ \nonumber \\ \nonumber 
&\nonumber F_1 \sim_{\H_{h_0}} 2^{\frac{3}{2}-\betaI} \xi_*^{\frac2\beta} h^{\frac{1}{2}-\betaI}, \qquad G_1   \sim_{\H_{h_0}} 2^{\frac{1}{2}-\betaI} \xi_*^{\frac2\beta} h^{-\frac{1}{2}-\betaI}
\end{align}

\begin{equation}
\label{eqn:FS3a}
\tag{FS3a}
\begin{array}{lr}
|\partial_P^{i} f_3| \precsim_{\mathcal{S}_3}C(\xi_*) h^{\frac{1}{2}-\frac{1}{2}i+\alphaI},&\,\, |\partial_P^i g_3|\precsim_{\mathcal{S}_3}C(\xi_*) h^{-\frac{1}{2}-\frac{1}{2}i+\alphaI}\\\\
|\partial_P^i F_3| \precsim_{\H_{h_0}} C(\xi_*) h^{\frac{1}{2}-\frac{1}{2}i+\alphaI}, &\,\, |\partial_P^i G_3 |\precsim_{\H_{h_0}} C(\xi_*) h^{-\frac{1}{2}-\frac{1}{2}i+\alphaI}
\end{array}
\end{equation}

\begin{align}
\nonumber \\ 
\nonumber &\partial_P (f_3-F_3) \sim_{\mathcal{S}_{23}^+}, \,\, \partial_P f_3 \sim_{\mathcal{S}_{23}^-}2^{1+\alphaI} \alphaI \xi_*^{-\alphaII} h^{\alphaI}\\ \nonumber  \\\nonumber 
&\partial_P (g_3-G_3) \sim_{\mathcal{S}_{23}^+}, \,\, \partial_P g_3 \sim_{\mathcal{S}_{23}^-} 2^{\alphaI} \alphaI \xi_*^{-\alphaII} h^{\alphaI-1}\\\nonumber  \\ 
\label{eqn:FS3b}
\tag{FS3b} &\partial_P g_3 >0 \text{ on } \mathcal{S}_{30}, \,\,  \partial_P f_3 \text{ on } \mathcal{S}_{30} 
\\ \nonumber \\
&\nonumber F_3 \sim_{\H_{h_0}}  \alphaII I_{f_3}(\infty) h^{\frac{1}{2}+\alphaI}, \,\, G_3 \sim_{\H_{h_0}} \alphaII I_{g_3}(\infty) h^{-\frac{1}{2}+\alphaI}\\ \nonumber \\
&\nonumber I_{f_3}(\infty) =\frac{\sqrt{2\pi}}{a^{\alphaI}} \frac{\Gamma(\alphaI)}{\Gamma(\alphaI+ \tfrac{1}{2})} \frac{\alpha}{\alpha+2}, \,\, I_{g_3}(\infty)= \frac{\sqrt{2\pi}}{2a^{\alphaI}} \frac{\Gamma(\alphaI)}{\Gamma(\alphaI+ \tfrac{1}{2})}\\ \nonumber 
\end{align}

\begin{equation}
\label{eqn:FS2a}
\tag{FS2a}
\begin{array}{lr}
|\partial_P^i f_2| \precsim_{\mathcal{S}_2^\pm}C(\xi_*) h^{\frac{1}{2}- \frac{1}{2}i+\alphaI},&\,\, |\partial_P^i g_2|\precsim_{\mathcal{S}_2^\pm}C(\xi_*) h^{-\frac{1}{2}-\frac{1}{2}i +\alphaI}\\\\
|\partial_P^i F_2| \precsim_{\H_{h_0}} C(\xi_*) h^{\frac{1}{2}- \frac{1}{2}i+\alphaI}, &\,\, |\partial_P^i G_2| \precsim_{\H_{h_0}} C(\xi_*) h^{-\frac{1}{2}-\frac{1}{2}i +\alphaI}
\end{array}
\end{equation}

\begin{align}
\nonumber \\
\nonumber &\partial_P f_2  \sim_{\mathcal{S}_{12}^-}, \,  \partial_P (f_2 -\tfrac{1}{2}F_2) \sim_{\mathcal{S}_{12}^+} -2^{1-\betaI} \betaI \xi_*^{\frac2\beta} h^{-\betaI}\\ \nonumber \\
\nonumber &\partial_P g_2 \sim_{\mathcal{S}_{12}^-}, \, \partial_P (g_2 -\tfrac{1}{2}G_2) \sim_{\mathcal{S}_{12}^+} -2^{-\betaI} \betaI \xi_*^{\frac2\beta} h^{-1-\betaI}\\   \nonumber \\ \label{eqn:FS2b} \tag{FS2b}
 &\partial_P (f_2-\tfrac{1}{2}F_2) \sim_{\mathcal{S}_{23}^-}, \,\partial_P f_2 \sim_{\mathcal{S}_{23}^+} 2^{1+\alphaI} \alphaI \xi_*^{-\alphaII} h^{\alphaI}\\\nonumber \\
\nonumber &\partial_P (g_2-\tfrac{1}{2}G_2) \sim_{\mathcal{S}_{23}^-}, \, \partial_P g_2 \sim_{\mathcal{S}_{23}^+} 2^{\alphaI} \alphaI \xi_*^{-\alphaII} h^{-1+\alphaI}\\ \nonumber \\
\nonumber &F_2\sim_{\H_{h_0}} 2^{\frac{3}{2}+\alphaI} \xi_*^{-\alphaII} h^{\frac{1}{2}+\alphaI}, \,\, G_2 \sim_{\H_{h_0}} 2^{\frac{1}{2}+\alphaI} \xi_*^{-\alphaII} h^{\alphaI-\frac{1}{2}}
\end{align}

\begin{align}
\label{eqn:AF}
\tag{AF}
\mathcal{A}_\pm(h) \sim_{\H_{h_0}}\frac{2\alpha}{\alpha+2} h, \,\,\, |\partial_P \mathcal{A}_{\pm}|\precsim_{\H_{h_0}} C(\xi_*) h^{\frac{1}{2}}, \,\, \,|\partial_P^2 \mathcal{A}_\pm| \precsim_{\H_{h_0}} C(\xi_*) .\\\nonumber
\end{align}

The formulas~\eqref{eqn:AF} follow easily from the formulas above it.  Before establishing all of the remaining formulas, first recall the definitions of $k(h)$ and $j(h)$ introduced just above Definition~\ref{def:approxdyn}.

\begin{proof}[Proof of Formulas~\eqref{eqn:FS1a} and~\eqref{eqn:FS1b}]
Recall that in the region $\mathcal{S}_1\setminus \mathcal{S}_{12}^+$, the approximate dynamics is that dynamics determined by the Hamiltonian $K=K(Q,P)$.  We will first establish a few helpful facts about this dynamics.  Letting $\eta=PQ^{\frac\beta2}$ note that we can express $K$ as follows 
\begin{align*}
K= \frac{P^2}{2}+ b Q^{-\beta}=  Q^{-\beta} \Big( \frac{\eta^2}{2} + b \Big) .  
\end{align*}  
Observe that while $K$ is Hamiltonian in $(Q,P)$, it is not Hamiltonian in $(Q, \eta)$.  Nevertheless, applying the chain rule produces
\begin{align*}
\dot{\eta} &= \beta Q^{-\frac\beta2-1} \Big( \frac{\eta^2}{2} + b\Big)\\
\dot{Q}&=  Q^{-\frac\beta2} \eta.  
\end{align*} 
and that, for any two points $(Q(\eta_1), \eta_1)$ and $(Q(\eta_2), \eta_2)$ on the same solution curve $K=k(h)$,  
\begin{align}
\label{eqn:curveq1}
Q(\eta_1) = Q(\eta_2) \frac{\Big(\frac{\eta^2_1}{2}+ b \Big)^{1/\beta}}{\Big(\frac{\eta_2^2}{2}+ b \Big)^{1/\beta}}.  
\end{align}
Using these facts, we will now derive quasi-explicit formulas for $f_1, g_1, F_1, G_1$ from which~\eqref{eqn:FS1a} and \eqref{eqn:FS1b} will follow.

Notice that for $(Q,P)\in \mathcal{S}_1 \cap \Gamma(h)$:
\begin{align}
\nonumber f_1(Q,P)&= \int_0^{\tau_1(Q,P)} P_s^2 \, ds =  \int_0^{\tau_1(Q,P)} \eta_s^2 Q_s^{-\beta} \, ds = \int_{PQ^{\frac\beta2}}^{\xi_*} \eta^2 Q(\eta)^{-\beta } \bigg| \frac{ds}{d\eta}\bigg| \,d \eta\\
\label{eqn:qstar1}&=\frac{Q(\xi_*)^{1-\frac\beta2}}{\beta\big( \frac{\xi_*^2}{2}+b\big)^{-\frac{1}{2}+ \betaI}}\int_\eta^{\xi_*} \frac{x^2}{\big( \frac{x^2}{2}+b\big)^{\frac{3}{2}-\betaI}}\, dx
\end{align}
where in the last equality $\eta=PQ^{\frac\beta2}$, we related $Q(\eta)$ with $Q(\xi_*)$ using equation~\eqref{eqn:curveq1} and we replaced the variable of integration with $x$.  Noting that 
\begin{align}
\label{eqn:curveq2}
Q(\xi_*) =  \frac{\big(\frac{\xi_*^2}{2} +b \big)^{1/\beta}}{k(h)^{1/\beta}} 
\end{align}  
and replacing $Q(\xi_*)$ in equation~\eqref{eqn:qstar1} with the righthand side of~\eqref{eqn:curveq2} produces the following formula $f_1 $ for $(Q,P) \in \mathcal{S}_2\cap \Gamma(h)$:
\begin{align*}
\boxed{f_1(Q,P) = \betaI k(h)^{\frac{1}{2}-\betaI} I_{f_1}(\eta, \xi_*) }
\end{align*}
where $\eta=PQ^{\frac\beta2}$ and 
\begin{align*}
I_{f_1}(\eta, \xi_*) = \int_\eta^{\xi_*} \frac{x^2}{\big( \frac{x^2}{2}+b\big)^{\frac{3}{2}-\betaI}}\, dx.  \end{align*}
Plugging in $\eta=-\xi_*$ using the fact that the integrand is an even function produces the following formula for $F_1$ 
\begin{align*}
\boxed{F_1(h) = \frac2\beta k(h)^{\frac{1}{2}-\betaI} I_{f_1}(0, \xi_*)}.  
\end{align*}
  
To derive a similar expression for $g_1$ and hence $G_1$, following a similar line of reasoning we notice that for $(Q,P) \in \mathcal{S}_1\cap \Gamma(h)$:
\begin{align*}
g_1(Q,P)&= \int_0^{\tau_1(Q,P)} \, ds= \int_{PQ^{\frac\beta2}}^{\xi_*} \bigg| \frac{ds}{d\eta}\bigg| \,d \eta\\ &= \frac{1}{\beta}\frac{Q(\xi_*)^{1+\frac{\beta}{2}}}{\big( \frac{\xi_*^2}{2}+ b\big)^{\hf+ \betaI}} \int_{\eta}^{\xi_*} \frac{1}{\big(\frac{x^2}{2}+ b \big)^{\frac{1}{2}-\betaI}} \, dx.
\end{align*}
Again, replacing $Q(\xi_*)$ with the righthand side of~\eqref{eqn:curveq2} we find that for $(Q,P) \in \mathcal{S}_1 \cap \Gamma(h)$
\begin{align*}
\boxed{g_1(Q,P) = \betaI k(h)^{-\frac{1}{2}-\betaI} I_{g_1}(\eta, \xi_*) }
\end{align*}
where 
\begin{align*}
I_{g_1}(\eta, \xi_*) = \int_{\eta}^{\xi_*} \frac{1}{\big(\frac{x^2}{2}+ b \big)^{\frac{1}{2}-\betaI}} \, dx.
\end{align*}
Hence, we see that 
\begin{align*}
\boxed{G_1(h)= \frac2\beta k(h)^{-\frac{1}{2}-\betaI} I_{g_1}(0, \xi_*)}.  
\end{align*}

Now we can use these boxed expressions to establish the claimed formulas.  Indeed, observe that because 
\begin{align*}
I_{f_1}(0, \xi_*) &= \beta 2^{\frac{1}{2}-\betaI}\xi_*^{\frac2\beta}(1+ o(1))  \\
I_{g_1}(0, \xi_*)&= \beta 2^{-\frac{1}{2}-\betaI} \xi_*^{\frac2\beta}(1+o(1))
\end{align*}  
as $\xi_* \rightarrow \infty$ and $f_1 \leq F_1$, $g_1 \leq G_1$ we obtain
\begin{equation*}
\begin{array}{lr}
f_1 \precsim_{\mathcal{S}_1} C(\xi_*) h^{\frac{1}{2}-\betaI} & \qquad g_1 \precsim_{\mathcal{S}_1} C(\xi_*) h^{-\frac{1}{2}-\betaI}\\
F_1 \sim_{\H_{h_0}} 2^{\frac{3}{2}-\betaI} \xi_*^{\frac2\beta} h^{\frac{1}{2}-\betaI}& \qquad G_1   \sim_{\H_{h_0}} 2^{\frac{1}{2}-\betaI} \xi_*^{\frac2\beta} h^{-\frac{1}{2}-\betaI}.  
\end{array}
\end{equation*}
Also, it is not hard to check that by differentiating the boxed formulas above
\begin{equation*}
\begin{array}{lr}
\partial_P f_1 \precsim_{\mathcal{S}_1} C(\xi_*) h^{-\betaI}& \qquad \qquad \partial_P g_1 \precsim_{\mathcal{S}_1} C(\xi_*) h^{-1-\betaI}\\
\partial_P^2 f_1 \precsim_{\mathcal{S}_1} C(\xi_*) h^{-\frac{1}{2}-\betaI}& \qquad \qquad \partial_P^2 g_1 \precsim_{\mathcal{S}_1} C(\xi_*) h^{-\frac{3}{2}-\betaI}\\
\partial_P F_1 \precsim_{\H_{h_0}} C(\xi_*) h^{-\betaI}& \qquad \qquad \partial_P G_1 \precsim_{\H_{h_0}} C(\xi_*) h^{-1-\betaI}\\
\partial_P^2 F_1 \precsim_{\H_{h_0}} C(\xi_*) h^{-\frac{1}{2}-\betaI}& \qquad \qquad \partial_P^2 G_1 \precsim_{\H_{h_0}} C(\xi_*) h^{-\frac{3}{2}-\betaI}  .
\end{array}
\end{equation*}
In order to obtain the remaining precise formulas, notice that on the relevant domain
\begin{align*}
\partial_P f_1& = P \betaI(\hf-\betaI) k(h)^{-\tfrac{1}{2}-\betaI} I_{f_1}(\eta, \xi_*)- \betaI k(h)^{\frac{1}{2}-\betaI} \frac{\eta^2 Q^{\frac\beta2}}{\big( \frac{\eta^2}{2} + b\big)^{\frac{3}{2}-\betaI}} \\
\partial_P g_1&= P \betaI(-\hf-\betaI) k(h)^{-\frac{3}{2}-\betaI} I_{g_1}(\eta, \xi_*)- \betaI k(h)^{-\frac{1}{2}-\betaI} \frac{Q^{\frac\beta2}}{\big( \frac{\eta^2}{2} + b\big)^{\frac{1}{2}-\betaI}} \\
\partial_P F_1&=   2 P \betaI(\hf-\betaI) k(h)^{-\frac{1}{2}-\betaI} k'(h) I_{f_1}(0, \xi_*)\\
\partial_P G_1&=  2 P \betaI(-\hf-\betaI) k(h)^{-\frac{3}{2}-\betaI} k'(h) I_{g_1}(0, \xi_*).
\end{align*}
Using the above, we find that since $\lim_{h\rightarrow \infty } (h^{-1}k'(h))=1$
\begin{align*}
\partial_P(f_1- F_1) \sim_{\mathcal{S}_{12}^-} - 2^{1-\betaI}\betaI \xi_*^{\frac2\beta} h^{-\betaI},& \,\,
\partial_P f_1 \sim_{\mathcal{S}_{12}^+} - 2^{1-\betaI}\betaI \xi_*^{\frac2\beta} h^{-\betaI}\\
\partial_P(g_1-G_1) \sim_{\mathcal{S}_{12}^-}- 2^{-\betaI} \xi_*^{\frac2\beta} h^{-1-\betaI},& \,\,  \partial_p g_1 \sim_{\mathcal{S}_{12}^+} - 2^{-\betaI} \xi_*^{\frac2\beta} h^{-1-\betaI}
\end{align*}
and that $\partial_P g_1 <0$ on $\mathcal{S}_{10}$ and $\partial_P f_1=0$ on $\mathcal{S}_{10}$.  Note that this now finishes the proof of the first set of formulas~\eqref{eqn:FS1a} and~\eqref{eqn:FS1b}.  
\end{proof}

Next we will derive the formulas~\eqref{eqn:FS3a} and~\eqref{eqn:FS3b}.

\begin{proof}[Proof of Formulas~\eqref{eqn:FS3a} and~\eqref{eqn:FS3b}]
In the region $\mathcal{S}_3$, we will follow a process similar to the proof of the formulas~\eqref{eqn:FS1a} and~\eqref{eqn:FS1b}.  First, express the Hamiltonian in the region $\mathcal{S}_3$ as follows:
\begin{align*}
J= \frac{P^2}{2}+ a Q^{\alpha} = Q^\alpha\Big( \frac{\xi^2}{2}+ a \Big)
\end{align*}
where $\xi=PQ^{-\frac\alpha2}$.  We will now derive some helpful facts about the dynamics along $J$.  As before with $K$, while $J$ is Hamiltonian in $(Q,P)$ it is not Hamiltonian in $(Q, \xi)$. Nonetheless,
the chain rule gives that 
\begin{align*}
\dot{\xi}&=- \alpha Q^{\frac{\alpha}{2}-1}\Big( \frac{\xi^2}{2} +a \Big)\\
\dot{Q}&= Q^{\frac\alpha2} \xi.  
\end{align*}        
Moreover, for any two points $(Q(\xi_1), \xi_1)$ and $(Q(\xi_2), \xi_2)$ on the same solution curve $J=j(h)$, we have that 
\begin{align*}
Q(\xi_1) = Q(\xi_2) \frac{\Big( \frac{\xi_2^2}{2}+ a\Big)^{1/\alpha}}{\Big( \frac{\xi^2_1}{2}+ a\Big)^{1/\alpha}}.  
\end{align*}
We now derive quasi-explicit expressions for $f_3, g_3, F_3, G_3$ from which the claimed formulas~\eqref{eqn:FS3a} and~\eqref{eqn:FS3b} will follow.

Notice that for $(Q,P) \in \mathcal{S}_3 \cap \Gamma(h)$ we have 
\begin{align}
\nonumber f_3(Q,P)&=\int_0^{\tau_3(Q,P)} P_s^2 \, ds = \int_0^{\tau_3(q,p)} \xi_s^2 Q_s^\alpha \, ds= \int_{-\xi_*}^{PQ^{-\frac\alpha2}}\xi^2 Q(\xi)^{\alpha}  \bigg|\frac{ds}{d\xi}\bigg| \, d\xi \\
\label{eqn:expref1}&= \frac{1}{\alpha}Q(\xi_*)^{\frac{\alpha}{2}+1} \Big(\frac{\xi_*^2}{2} + a \Big)^{\frac{1}{2}+ \alphaI}\int_{-\xi_*}^{\xi} \frac{x^2}{\big( \frac{x^2}{2}+a\big)^{\frac{3}{2}+ \alphaI}}\, dx
\end{align}
where in the last formula we have written $\xi=PQ^{-\frac\alpha2}$.  
Noting that 
\begin{align}
\label{eqn:qexpr3}
Q(\xi_*)= \frac{j( h)^{1/\alpha}}{\Big(\frac{\xi_*^2}{2}+a \Big)^{1/\alpha}}
\end{align}
we can substitute this into~\eqref{eqn:expref1} to find that for $(Q,P) \in \mathcal{S}_3 \cap \Gamma(h)$:
\begin{align*}
\boxed{f_3(Q,P)= \alphaI j(h)^{\frac{1}{2}+\alphaI} I_{f_3}(-\xi_*, \xi)}
\end{align*}
where
\begin{align*}
I_{f_3}(-\xi_*, \xi)= \int_{-\xi_*}^{\xi} \frac{x^2}{\big( \frac{x^2}{2}+a\big)^{\frac{3}{2}+ \alphaI}}\, dx.
\end{align*}
By plugging $\xi=\xi_*$ into the formula for $f_3$ and using the fact that the integrand above is even, we see that 
\begin{align*}
\boxed{F_3(h)= \alphaII j(h)^{\frac{1}{2}+\alphaI} I_{f_3}(0, \xi_*)}.
\end{align*}

To obtain a quasi-explicit formula for $g_3(Q,P)$, follow a similar line of reasoning to find that for $(Q,P) \in \mathcal{S}_3\cap \Gamma(h)$
\begin{align}
\nonumber g_3(Q,P) = \int_0^{\tau_3(Q,P)} \, ds &=\alphaI \int_{-\xi_*}^{PQ^{-\frac\alpha2}}  \frac{Q(\xi)^{1-\frac\alpha2}}{ \big(\frac{\xi^2}{2}+a \big)} \, d\xi\\
\label{eqn:g3expr}&=  \alphaI\frac{Q(\xi_*)^{1-\frac{\alpha}{2}}}{ \big(\frac{\xi^2_*}{2} +a\big)^{\frac{1}{2}-\alphaI}}\int_{-\xi_*}^{\xi} \frac{1}{\big( \frac{x^2}{2}+a\big)^{\frac{1}{2}+ \alphaI}} \, dx.  \end{align}  
Substituting the righthand side of~\eqref{eqn:qexpr3} in for $Q(\xi_*)$ we see that for $(Q,P) \in \mathcal{S}_3\cap \Gamma(h)$
\begin{align*}
\boxed{g_3(Q,P) = \alphaI j(h)^{-\frac{1}{2}+\alphaI} I_{g_3}(-\xi_*, \xi)}
\end{align*}
where
\begin{align*}
I_{g_3}(-\xi_*, \xi)= \int_{-\xi_*}^{\xi} \frac{1}{\big( \frac{x^2}{2}+a\big)^{\frac{1}{2}+ \alphaI}} \, dx.\end{align*}
Substituting $\xi=\xi_*$ into the formula for $g_3$ produces the following expression for $G_3$
\begin{align*}
\boxed{G_3(h)= \alphaII j(h)^{-\frac{1}{2}+\alphaI} I_{g_3}(0, \xi_*)}.
\end{align*}

By using and differentiating the boxed formulas above, we can easily see that 
\begin{equation*}
\begin{array}{lr}
f_3 \precsim_{\mathcal{S}_1} C(\xi_*) h^{\frac{1}{2}+\alphaI},&\,\, g_3  \precsim_{\mathcal{S}_1} C(\xi_*) h^{-\frac{1}{2}+\alphaI}\\
\partial_P f_3 \precsim_{\mathcal{S}_1} C(\xi_*) h^{\alphaI},&\,\, \partial_P g_3  \precsim_{\mathcal{S}_1} C(\xi_*) h^{-1+\alphaI}\\
\partial_P^2 f_3  \precsim_{\mathcal{S}_1} C(\xi_*) h^{-\frac{1}{2}+\alphaI},&\,\, \partial_P^2 g_3  \precsim_{\mathcal{S}_1} C(\xi_*) h^{-\frac{3}{2}+\alphaI}\\
\partial_P F_3 \precsim_{\H_{h_0}} C(\xi_*) h^{\alphaI}, &\,\,\partial_P G_3 \precsim_{\H_{h_0}} C(\xi_*) h^{-1+\alphaI}\\
\partial_P^2 F_3 \precsim_{\H_{h_0}} C(\xi_*) h^{-\frac{1}{2}+\alphaI}, &\,\,\partial_P G_3 \precsim_{\H_{h_0}} C(\xi_*) h^{-\frac{3}{2}+\alphaI}.  
\end{array}
\end{equation*}
To arrive at the precise formulas, we need the following:

\vspace{0.1in}

\emph{Claim 1}.  
\begin{align*}
I_{f_3}(\infty)=\lim_{\xi_*\rightarrow \infty}I_{f_3}(0, \xi_*) &=\frac{\sqrt{2\pi}}{a^{\alphaI}} \frac{\Gamma(\alphaI)}{\Gamma(\alphaI+ \tfrac{1}{2})} \frac{\alpha}{\alpha+2}\\
I_{g_3}(\infty)=\lim_{\xi_*\rightarrow \infty}I_{g_3}(0, \xi_*)&=\frac{\sqrt{2\pi}}{2a^{\alphaI}} \frac{\Gamma(\alphaI)}{\Gamma(\alphaI+ \tfrac{1}{2})} 
\end{align*}

\vspace{0.1in}

\emph{Proof of Claim 1}.  
This fact follows easily from the formula
\begin{align*}
\int_0^\infty \frac{1}{(x^2 +1)^p} \, dx = \frac{\sqrt{\pi}}{2} \frac{\Gamma(p-\frac{1}{2})}{\Gamma(p)},  
\end{align*}
which is valid for $p>\frac{1}{2}$, and by basic integral substitution methods.  $\blacksquare$   

\vspace{0.1in}

Along with the formulas:
\begin{align*}
\partial_P f_3& = \alphaI( \tfrac{1}{2}+ \alphaI) j(h)^{-\frac{1}{2}+\alphaI} I_{f_3}(- \xi_*, \xi) P+ \alphaI j(h)^{\frac{1}{2}+\alphaI} \frac{\xi^2 Q^{-\frac\alpha2}}{\big(\frac{\xi^2}{2} +a \big)^{\frac{3}{2}+\alphaI}}\\
\partial_P g_3& = \alphaI( \alphaI-\tfrac{1}{2}) j(h)^{\alphaI-\frac{3}{2}} I_{g_3}(- \xi_*, \xi) P+ \alphaI j(h)^{\alphaI-\frac{1}{2}} \frac{Q^{-\frac\alpha2}}{\big(\frac{\xi^2}{2} +a \big)^{\frac{1}{2}+\alphaI}}\\
\partial_P F_3&= \alphaI(\tfrac{1}{2}+\alphaI) j(h)^{\alphaI-\frac{1}{2}}j'(h) I_{f_3}(-\xi_*, \xi_*) P\\
\partial_P G_3&= \alphaI(\alphaI-\tfrac{1}{2}) j(h)^{\alphaI-\frac{3}{2}}j'(h) I_{g_3}(-\xi_*, \xi_*) P,\end{align*}
and the fact that $h^{-1}j'(h) \rightarrow 1 $ as $h\rightarrow \infty$, Claim 1 now allows us to conclude the following:
\begin{equation*}
\begin{array}{lr}
F_3 \sim_{\H_{h_0}} \alphaII I_{f_3}(\infty) h^{\frac{1}{2}+\alphaI}, & G_3 \sim_{\H_{h_0}} \alphaII I_{g_3}(\infty) h^{-\frac{1}{2}+\alphaI}\\\\
\partial_Pf_3\sim_{\mathcal{S}_{23}^-} 2^{1+\alphaI} \alphaI \xi_*^{-\alphaII} h^{\alphaI},& \partial_P(f_3-F_3)\sim_{\mathcal{S}_{23}^+} 2^{1+\alphaI} \alphaI \xi_*^{-\alphaII} h^{\alphaI}  \\\\
\partial_P g_3 \sim_{\mathcal{S}_{23}^-}   2^{\alphaI} \alphaI \xi_*^{-\alphaII} h^{\alphaI-1}, &  \partial_P (g_3-G_3) \sim_{\mathcal{S}_{23}^+}   2^{\alphaI} \alphaI \xi_*^{-\alphaII} h^{\alphaI-1}.\end{array}
\end{equation*}
Moreover, using the expressions above we find that $\partial_P g_3 >0$ on $\mathcal{S}_{30}$ and that $\partial_P f_3=0$ on $\mathcal{S}_{30}$.  Note that this finishes the proof of formulas~\eqref{eqn:FS3a} and~\eqref{eqn:FS3b}. 
\end{proof}



We now establish the remaining formulas~\eqref{eqn:FS2a} and~\eqref{eqn:FS2b}

\begin{proof}[Proof of Formulas~\eqref{eqn:FS2a} and~\eqref{eqn:FS2b}]

In the region $\mathcal{S}_2$, we will use the coordinates $(Q,h)$ where 
\begin{align*}
h= \frac{P^2}{2}+U(Q). 
\end{align*} 
To start, recall the quantities $Q_1=Q_1(\xi_*, h)$ and $Q_3=Q_3(\xi_*, h)$ introduced just above Definition~\ref{def:approxdyn}.   
Both of the quantities $Q_1$ and $Q_3$ exist and are twice continuously differentiable in $h$ for $h\geq h_0$ for all $h_0=h_0(\xi_*)$ large enough.  These derivatives will be denoted by $Q_i'$ and $Q_i''$ below.  Now observe that \begin{align*}
Q_3^\alpha \sim_{\H_{h_0}} \frac{2 h}{\xi^2_*}\,\, &\text{ and }\,\, Q_1^{-\beta} \sim_{\H_{h_0}} \frac{2 h}{\xi^2_*}\\
Q_3' \sim_{\H_{h_0}} \alphaI2^{\alphaI} \xi^{-\alphaII}_* h^{-1+\alphaI} \,\, &\text{ and } \,\, Q_1' \sim_{\H_{h_0}} - \betaI 2^{-\betaI} \xi_*^{\frac2\beta} h^{-1-\betaI}\\
|Q_3''|\precsim_{\H_{h_0}} C(\xi_*) h^{-2 + \alphaI}\,\, &\text{ and } \,\, |Q_1''| \precsim_{\H_{h_0}} C(\xi_*) h^{-2-\betaI}.
\end{align*}  
for some constant $C(\xi_*)$ depending on $\xi_*$.  

We now derive the formulas for $f_2, g_2, F_2, G_2$ from which the claimed formulas will follow.  For notational purposes, let 
\begin{align*}
I(a,b, \zeta) = \int_a^b (h- U(q))^{\zeta}\, dq.    
\end{align*}
After changing variables from $s$ to $Q$ we find that on $\mathcal{S}_2^-$
\begin{align*}
\Aboxed{f_2(Q,P)&= \int_0^{\tau_2(Q,P)} P_s^2 \,ds =\int_{Q_1}^{Q} \sqrt{2}(h-U(q))^{\hf} \, dq =\sqrt{2} I(Q_1, Q, \tfrac{1}{2})}\\
\Aboxed{g_2(Q,P)&= \int_0^{\tau_2(Q,P)} \, ds = \int_{Q_1}^Q \frac{1}{\sqrt{2}} (h-U(q))^{-\hf} \, dq  = \frac{1}{\sqrt{2}} I(Q_1, Q, -\tfrac{1}{2})}
\end{align*}
and on $\mathcal{S}_2^+$
\begin{align*}
\Aboxed{f_2(Q,P)&= \int_0^{\tau_2(Q,P)} P_s^2 \, ds = \int_Q^{Q_3} \sqrt{2} (h- U(q))^{\hf} \, dq = \sqrt{2} I(Q, Q_3, \tfrac{1}{2})}\\
\Aboxed{g_2(Q,P)&= \int_0^{\tau_2(Q,P)}  \, ds = \int_{Q}^{Q_3} \frac{1}{\sqrt{2}} (h-U(q))^{-\hf} \, dq  =\frac{1}{\sqrt{2}} I(Q, Q_3, -\tfrac{1}{2}) }
\end{align*}
It is important to remark that each of the quantities above is twice continuously differentiable in $Q$, $P$ and $h$ on their respective domains, as $|P|=\sqrt{2}(h-U(Q))^{\hf}$ is bounded below on $\mathcal{S}_{2}$ by $\xi_*/Q^{\beta/2}$.  The following expressions 
\begin{align*}
\Aboxed{F_2(h)&= 2^{\frac{3}{2}} I(Q_1, Q_3, \tfrac{1}{2})}\\
\Aboxed{G_2(h)&= 2^{\frac{1}{2}}I(Q_1, Q_3, -\tfrac{1}{2})}
\end{align*}    
follow easily by substituting the relevant endpoint, either $Q=Q_1$ or $Q=Q_3$, into the formulas above and doubling the result via symmetry.  To obtain the desired formulas, we will need the following claim.

\emph{Claim 2}. 
\begin{align*}
I(Q_1, Q_3, \zeta) \sim_{\H_{h_0}}2^{\alphaI} \xi_*^{-\alphaII} h^{\zeta + \alphaI}
\end{align*}

\emph{Proof of Claim 2}.  Consider the modified potential 
\begin{align*}
\widetilde{U}_\epsilon(q) \eqdef a q^\alpha + \sum_{i=2}^l a_i \epsilon^{r_i} q^{\alpha_i}, \,\, r_i = 1-\alpha_i/\alpha,
\end{align*}     
which has the scaling property $\widetilde{U}_\epsilon ( t^{\alphaI} q) = t \widetilde{U}_{\epsilon/t}(q) $ for $\epsilon, q,t >0$.  Since $h>0$ is constant in the integral $I(Q_1, Q_3, \zeta)$ we note that    
\begin{align*}
I(Q_1, Q_1, \zeta) &=h^{\zeta} \int_{Q_1}^{Q_3} (1- h^{-1} \widetilde{U}_1(q)) \, dq\\
& = h^{\zeta} \int_{Q_1}^{Q_3}  (1- \widetilde{U}_{h^{-1}}( h^{-\frac{1}{\alpha}} q) )\, dq \\
&= h^{\zeta+ \alphaI} \int_{h^{-\frac{1}{\alpha}}Q_1}^{h^{-\frac{1}{\alpha}}Q_3}  (1- \widetilde{U}_{h^{-1}}(q) )^\zeta \, dq
\end{align*}  
where in the last equality we made an integral substitution.  
Observe that for $q\in [Q_1 h^{-\frac{1}{\alpha}}, Q_3 h^{-\frac{1}{\alpha}}]$ we have the bounds for $h\geq h_0$:
\begin{align*}
1- U_{h^{-1}}(q)& \leq 1 + \sum_{i: \alpha_i \geq 0} |a_i| h^{-r_i} (Q_3 h^{-\alphaI})^{\alpha_i}+ \sum_{i: \alpha_i <0} |a_i| h^{-r_i} (Q_1 h^{-\alphaI})^{\alpha_i}\\
&= 1+  \sum_{i: \alpha_i \geq 0} |a_i| h^{-1} Q_3^{\alpha_i}+ \sum_{i: \alpha_i <0} |a_i| h^{-1} Q_1^{\alpha_i},\\
1- U_{h^{-1}}(q) &\geq 1 - \sum_{i: \alpha_i \geq 0} |a_i| h^{-r_i} (Q_3 h^{-\alphaI})^{\alpha_i}- \sum_{i: \alpha_i <0} |a_i| h^{-r_i} (Q_1 h^{-\alphaI})^{\alpha_i}\\
&=   1-  \sum_{i: \alpha_i \geq 0} |a_i| h^{-1} Q_3^{\alpha_i}- \sum_{i: \alpha_i <0} |a_i| h^{-1} Q_1^{\alpha_i}.  
\end{align*} 
Applying the asymptotic formulas for $Q_i$, $i=1,3$, it follows from the above bounds that for every $\epsilon >0$, there exists $\xi_*>0$ such that for all $h$ large enough we have 
\begin{align*}
(1- \epsilon) h^{\zeta + \alphaI} h^{-\alphaI}( Q_3 - Q_1) \leq I(Q_1, Q_1, \zeta) \leq (1+ \epsilon) h^{\zeta + \alphaI} h^{-\alphaI} (Q_3 - Q_1).
\end{align*}
Using the asymptotic formulas for $Q_1$ and $Q_3$ again, we obtain the claimed formula.  $\blacksquare$ 

\vspace{0.1in}

Using these quasi-explicit expressions for $f_2, g_2, F_2, G_2$, Claim~2 and the asymptotic formulas for $Q_1, Q_3$ and their derivatives, it is not hard to show that 
\begin{align*}
\begin{array}{lr}
f_2 \precsim_{\mathcal{S}_2^\pm} C(\xi_*) h^{\frac{1}{2}+\alphaI}, & g_2 \precsim_{\mathcal{S}_2^\pm} C(\xi_*) h^{\alphaI-\frac{1}{2}}\\
\partial_P f_2 \precsim_{\mathcal{S}_2^\pm} C(\xi_*) h^{\alphaI}, & \partial_P g_2 \precsim_{\mathcal{S}_2^\pm} C(\xi_*) h^{\alphaI-1} \\
\partial_P^2 f_2 \precsim_{\mathcal{S}_2^\pm} C(\xi_*) h^{\alphaI-\frac{1}{2}}, & \partial_P^2 g_2 \precsim_{\mathcal{S}_2^\pm} C(\xi_*) h^{\alphaI-\frac{3}{2}}\\
F_2 \sim_{\H_{h_0}} 2^{\frac{3}{2}+\alphaI} \xi_*^{-\alphaII} h^{\frac{1}{2}+\alphaI}, &\,\, G_2 \sim_{\H_{h_0}} 2^{\alphaI+\frac{1}{2}} \xi_*^{-\alphaII} h^{\alphaI-\frac{1}{2}}\\
\partial_P F_2 \precsim_{\H_{h_0}} C(\xi_*) h^{\alphaI}, & \partial_P G_2 \precsim_{\H_{h_0}} C(\xi_*) h^{\alphaI-1} \\
\partial_P^2 F_2 \precsim_{\H_{h_0}} C(\xi_*) h^{\alphaI-\frac{1}{2}}, & \partial_P^2 G_2 \precsim_{\H_{h_0}} C(\xi_*) h^{\alphaI-\frac{3}{2}}
\end{array}
\end{align*}
To obtain the precise formulas, observe that on $\mathcal{S}_2^-$ 
\begin{align*}
\partial_P f_2(Q,P) &= P \bigg[ -\xi_*\frac{Q_1'}{Q_1^{\frac\beta2}} + \frac{1}{\sqrt{2}}I(Q_1, Q, -\tfrac{1}{2})  \bigg] \\
\partial_P g_2(Q,P)& = P \bigg[ -\frac{Q_1^{\frac\beta2} Q_1'}{\xi_*} - \frac{1}{2^{\frac32}}I(Q_1, Q, -\tfrac{3}{2})  \bigg] 
\end{align*}
and on $\mathcal{S}_2^+$
\begin{align*}
\partial_P f_2(Q,P) &= P \bigg[ \xi_* Q_3^{\frac\alpha2}Q_3'+ \frac{1}{\sqrt{2}}I(Q, Q_3, -\tfrac{1}{2})  \bigg] \\
\partial_P g_2(Q,P)& = P \bigg[ \frac{Q_3^{-\frac\alpha2} Q_3'}{\xi_*} - \frac{1}{2^{\frac32}}I(Q, Q_3, -\tfrac{3}{2})  \bigg].  
\end{align*}  
Also realize that 
\begin{align*}
\partial_P (F_2(h)) &= 2 P \bigg[-\xi_*\frac{Q_1'}{Q_1^{\frac\beta2}} +  \xi_* Q_3^{\frac\alpha2}Q_3' + \frac{1}{\sqrt{2}} I(Q_1, Q_3, -\tfrac{1}{2})\bigg]\\
\partial_P (G_2(h))&= 2 P \bigg[- \frac{Q_1^{\frac\beta2} Q_1'}{\xi_*}+      \frac{Q_3^{-\frac\alpha2}}{\xi_* }Q_3' - \frac{1}{2^{\frac32}}I(Q_1, Q_3, -\tfrac{3}{2})\bigg].
\end{align*} 
By plugging in the asymptotic value of $P$ on each boundary, these expressions allow us to arrive at the claimed precise asymptotic formulas
\begin{align*}
&\partial_P f_2 \sim_{\mathcal{S}_{12}^-}, \,  \partial_P (f_2 -\tfrac{1}{2}F_2) \sim_{\mathcal{S}_{12}^+} -2^{1-\betaI} \betaI \xi_*^{\frac2\beta} h^{-\betaI}\\
&\partial_P g_2 \sim_{\mathcal{S}_{12}^-}, \, \partial_P (g_2 -\tfrac{1}{2}G_2) \sim_{\mathcal{S}_{12}^+} -2^{-\betaI} \betaI \xi_*^{\frac2\beta} h^{-1-\betaI}\\
&\partial_P (f_2-\tfrac{1}{2}F_2) \sim_{\mathcal{S}_{23}^-}, \,\partial_P f_2 \sim_{\mathcal{S}_{23}^+} 2^{1+\alphaI} \alphaI \xi_*^{-\alphaII} h^{\alphaI}\\
&\partial_P (g_2-\tfrac{1}{2}G_2) \sim_{\mathcal{S}_{23}^-}, \, \partial_P g_2 \sim_{\mathcal{S}_{23}^+} 2^{\alphaI} \alphaI \xi_*^{-\alphaII} h^{-1+\alphaI},\end{align*}
finishing the proof.

\end{proof}
\end{document}